\definecolor{green}{rgb}{0,0.6,0.2}
\definecolor{blue}{rgb}{0,0,1}
\theoremstyle{plain}
\newtheorem{theorem}{Theorem}[section]
\newtheorem{prop}[theorem]{Proposition}
\newtheorem{neu}{}[section]
\newtheorem{Cor}[neu]{Corollary}
\newtheorem*{Cor*}{Corollary}
\newtheorem{Thm}[neu]{Theorem}
\newtheorem*{Thm*}{Theorem}
\newtheorem{Prop}[neu]{Proposition}
\newtheorem*{Prop*}{Proposition}
\theoremstyle{definition}
\newtheorem*{Rmk*}{Remark}
\newtheorem{Rmk}[neu]{Remark}
\newtheorem*{Ex*}{Example}
\newtheorem{Qu}[neu]{Question}
\newtheorem*{Qu*}{Question}
\theoremstyle{remark}
\newtheorem{example}[neu]{Example}
\theoremstyle{definition}
\newcommand{\x}{\times}
\newcommand{\wh}{\widehat}
\newcommand{\p}{\partial}
\newcommand{\om}{\omega}
\newcommand{\Om}{\Omega}
\newcommand{\into}{\hookrightarrow}
\newcommand{\pf}{\longrightarrow}
\newcommand{\N}{{\mathbb{N}}}
\newcommand{\Z}{{\mathbb{Z}}}
\newcommand{\R}{{\mathbb{R}}}
\newcommand{\C}{{\mathbb{C}}}
\newcommand{\Q}{{\mathbb{Q}}}
\newcommand{\LLL}{\mathscr{L}}
\newcommand{\ind}{\mathrm{ind}}
\newcommand{\U}{{\mathrm{U}}}
\newcommand{\Sp}{\mathrm{Sp}}
\newcommand{\Spec}{{\rm Spec}}
\newcommand{\Crit}{{\rm Crit}}
\renewcommand{\AA}{\mathcal{A}}
\newcommand{\OO}{\mathcal{O}}
\newcommand{\MM}{\mathcal{M}}
\newcommand{\PP}{\mathcal{P}}
\newcommand{\beq}{\begin{equation}}
\newcommand{\beqn}{\begin{equation}\nonumber}
\newcommand{\eeq}{\end{equation}}
\newcommand{\bea}{\begin{equation}\begin{aligned}}
\newcommand{\bean}{\begin{equation}\begin{aligned}\nonumber}
\newcommand{\eea}{\end{aligned}\end{equation}}
\newcommand{\one}
{{{\mathchoice \mathrm{ 1\mskip-4mu l} \mathrm{ 1\mskip-4mu l}
\mathrm{ 1\mskip-4.5mu l} \mathrm{ 1\mskip-5mu l}}}}
\numberwithin{equation}{section}
\numberwithin{figure}{section}
\begin{document}
%%%%%%%%%%%%%%%%%%%%%%%%%%%%%%%%%%%%%%%%%%%%%%%%%%%%%%%%%%%%%%%%%%%%%
\title[Equivariant symplectic homology and multiple closed Reeb orbits]{Equivariant symplectic homology and\\ multiple closed Reeb orbits}
\author{Jungsoo Kang}
\address{Department of Mathematical Sciences\\
     Seoul National University, Seoul, Korea}
\address{Mathematisches Institut,
Westf\"alische Wilhelms-Universit\"at M\"unster, M\"unster, Germany}
\email{jungsoo.kang@me.com}
\begin{abstract}
We study the existence of multiple closed Reeb orbits on some contact manifolds by means of $S^1$-equivariant symplectic homology and the index iteration formula. We prove that a certain class of contact manifolds which admit displaceable exact contact embeddings, a certain class of prequantization bundles, and Brieskorn spheres have multiple closed Reeb orbits.
\end{abstract}

\keywords{$S^1$-equivariant symplectic homology, Index iteration formula, closed Reeb orbit}

\maketitle
\setcounter{tocdepth}{1}
\tableofcontents
\section{Introduction}

After Weinstein's famous conjecture \cite{Wei79}, the existence problem of a closed Reeb orbit has been extensively studied. It is natural to study the multiplicity of (simple) closed Reeb orbits in contact manifolds that are known to have one. This has been addressed in \cite{HWZ98,HWZ03,GHHM12} for tight 3-spheres and in \cite{HT09,CGH12} for general contact 3-manifolds. To the authors knowledge, there are few multiplicity results for general higher dimensional contact manifolds but there are a number of theorems \cite{EL80,BLM85,EH87,LZ02,WHL07,Wan11} for pinched or convex hypersurfaces in $\R^{2n}$.

In the present paper we study the multiplicity problem of closed Reeb orbits for nondegenerate contact manifolds which admit displaceable exact contact embeddings, prequantization bundles, and Brieskorn spheres. Our approach is based on $S^1$-equivariant symplectic homology and the index iteration formula. Although we only treat those three cases, we expect that our method can apply for other contact manifolds for which the formulas of $S^1$-equivariant symplectic homology (or contact homology) are nice in a sense that will be explained below. 

An embedding $i:\Sigma\into W$ of a contact manifold $(\Sigma,\xi)$ into a symplectic manifold $(W,\om)$ is called an {\em exact contact embedding} if $i(\Sigma)$ is bounding, $\om=d\lambda$ for some 1-form $\lambda$, and there exists a contact form $\alpha$ on $(\Sigma,\xi)$ such that $\ker\alpha=\xi$ and $\alpha-\lambda|_\Sigma$ is exact. Throughout this paper  we identify $i(\Sigma)$ with $\Sigma$ and every contact manifold is assumed to be closed. We also tacitly assume that every manifold is connected. Here by bounding we mean that $\Sigma$ separates $W$ into two connected components of which one is relatively compact. We denote by $W_0$ the relatively compact domain. This embedding is said to be {\em displaceable} if there exists a function $F\in C^\infty_c(S^1\x W)$ such that the associated Hamiltonian diffeomorphism $\phi_F$ displaces $\Sigma$ from itself, i.e. $\phi_F(\Sigma)\cap \Sigma=\emptyset$. A symplectic manifold $(W,\om)$ is called {\em convex at infinity} if there exists an exhaustion $W=\bigcup_{k}W_k$ of $W$ by compact sets $W_k\subset W_{k+1}$ with smooth boundaries  such that $\lambda|_{\p W_k}$, $k\in\N$ are contact forms. 

The {\em Reeb vector filed} $R$ on $(\Sigma,\alpha)$ is characterized by $\alpha(R)=1$ and $i_R d\alpha=0$. We recall that a closed Reeb orbit is {\em nondegenerate} if the linearized Poincar\'e return map associated to the orbit has no eigenvalue equal to 1. A contact from $\alpha$ on $(\Sigma,\xi)$ is called nondegenerate if every closed Reeb orbit is nondegenerate. \\[-1.5ex]

\noindent\textbf{Theorem A.} {\em Suppose that a closed contact manifold $(\Sigma,\xi)$ of dimension $2n-1$ admits a displaceable exact contact embedding into $(W,\om)$ which is convex at infinity and satisfies $c_1(W)|_{\pi_2(W)}=0$. Assume that at least one of the following conditions is satisfied:
\begin{itemize}
\item[(i)] $H_*(W_0,\Sigma;\Q)\neq0$ for some $*\in 2\N-1$ 
\item[(ii)] $H_*(W_0,\Sigma;\Q)=0$ for all even degree $*\leq 2n-4$
\end{itemize}
where $W_0$ is the relatively compact domain bounded by $\Sigma$. Then there are at least two closed Reeb orbits contractible in $W$ for any nondegenerate contact form $\alpha$ on $(\Sigma,\xi)$ such that $\alpha-\lambda|_\Sigma$ is exact.  }\\[-1.5ex]

One may ask if there are more than two closed Reeb orbits when both conditions (i) and (ii) are fulfilled. This question does not seem to be easily answered in general. However the Conley-Zehnder index of closed Reeb orbits on  3-dimensional contact manifolds is special enough to answer this question and the precise statement is given below.

The following contact manifolds meet the condition (ii) in the theorem. 
\begin{itemize}
\item[(1)] $(\Sigma,\xi)$ is a rational homology sphere;
\item[(2)] $(\Sigma,\xi)$ is a $\pi_1$-injective fillable 5-manifold;
\item[(3)] $(\Sigma,\xi)$ is a Weinstein fillable $5$-manifold;
\item[(4)] $(\Sigma,\xi)$ is a subcritical Weinstein fillable $7$-manifold.
\end{itemize}
It is worth pointing out that due to \cite[Lemma 3.4]{FSvK12} 
$$
H_*(\Sigma;\Q)\cong H_{*+1}(W_0,\Sigma;\Q)\oplus H_*(W_0;\Q)
$$
if $\Sigma$ is displaceable in $W$ and in particular $H_{*}(\Sigma;\Q)=0$ implies $H_{*+1}(W_0,\Sigma;\Q)=0$. Moreover this equation implies $H_1(W_0,\Sigma;\Q)=0$. \\[-1.5ex]

\begin{Qu*}
{\em Does every (nondegenerate) subcritical Weinstein fillable closed contact manifold has two closed Reeb orbits? More generally, does every closed contact manifold admitting a displaceable exact contact embeddings possess two closed Reeb orbits?}
\end{Qu*}

We expect that the above question will be answered positively. There is no particular reason for conditions (i) and (ii) in Theorem A to be essential. We include some examples in the appendix which do not meet such conditions but have two closed Reeb orbits. 

As we mentioned above, it turns out that every 3-dimensional closed contact manifold has two closed Reeb orbits \cite{CGH12}. Moreover if a nondegenerate closed contact 3-manifold is not a lens space there are at least three closed Reeb orbits \cite{HT09}. In the following we show that if a contact manifold $(\Sigma,\xi)$ in Theorem A is of dimension 3, we have at least $2+b_3(W_0,\Sigma;\Q)$ closed Reeb orbits where $b_3$ denotes the third Betti number. \\[-1.5ex]

\noindent\textbf{Corollary A.} {\em Suppose that a 3-dimensional closed contact manifold $(\Sigma,\xi)$ admits an exact contact embedding into $(W,\om)$ which is convex at infinity and satisfies $c_1(W)|_{\pi_2(W)}=0$. If $\Sigma$ displaceable in $(W,\om)$, then for any nondegenerate contact form $\alpha$ such that $\alpha-\lambda|_\Sigma$ is exact,
$$
\#\{\textrm{closed Reeb orbits contractible in $W$}\}\geq b_3(W_0,\Sigma;\Q)+2.
$$
Moreover, ${b_3(W,\Sigma;\Q)}$-many simple closed Reeb orbits are of Conley-Zehnder index 2.
In particular, if $(W,\om)$ is subcritical Weinstein,
$$
\#\{\textrm{closed Reeb orbits contractible in $W$}\}\geq b_2(\Sigma;\Q)+2.
$$}

\begin{Rmk}
A closed Reeb orbit $\gamma_0$ of Conley-Zehnder index 3 in Corollary \ref{cor:existence of closed Reeb orbit} and ${b_3(W,\Sigma;\Q)}$-many simple closed Reeb orbits $\{\gamma_1,\dots,\gamma_{b_3(W_0,\Sigma;\Q)}\}$ of Conley-Zehnder index 2 in the above corollary have the following nice property. There exist gradient flow lines of the symplectic action functional which connect such closed Reeb orbits with Morse critical points in $(W_0,\Sigma)$. These gradient flow lines can be used to obtain finite energy planes. This will be discussed in the forthcoming paper \cite{FK14}. For instance for subcritical Weinstein fillable contact 3-manifolds, using such finite energy planes, we are able to prove that if any closed Reeb orbit is linked with such $\gamma_i$, then the linking number is always positive.
\end{Rmk}

%\noindent\textbf{Theorem B2.} {If $\Sigma$ is a 5-dimensional nondegenerate contact manifold which is %subcritical Stein fillable, then
%$$
%\#\{\textrm{closed Reeb orbits}\}\geq\max\Big\{1,\frac{b_4(\Sigma)}{2}\Big\}+2.
%$$}

As a matter of fact, the proof of Theorem A heavily relies on the facts that the positive part of $S^1$-equivariant symplectic homology is periodic, i.e. $\dim SH_*^{S^1,+}(W)=\dim SH_{*+2}^{S^1,+}(W)$ for not small $*\in\N$ and that the positive part of $S^1$-equivariant symplectic homology vanishes for low degrees (condition (ii) in Theorem A guarantees this). In other words, we can find more than one closed Reeb orbits if (the positive part of) the $S^1$-equivariant symplectic homology of a fillable contact manifold is {\em nice} in such a sense. A certain class of prequantization bundles and Brieskorn spheres which are treated below have nice $S^1$-equivariant symplectic homologies and thus, for them, we are able to find more than one closed Reeb orbit.\\[-1.5ex]

Let $(Q,\Omega)$ be a symplectic manifold with an integral symplectic form $\Omega$, i.e. $[\Omega]\in H^2(Q;\Z)$. For each $k\in\N$, there exists a corresponding {\em prequantization bundle} $P$ over $Q$ with $c_1(P)=k[\Omega]$. Due to \cite{BW58}, such a prequantization bundle $(P,\xi:=\ker\alpha_{BW})$ is a contact manifold with a connection 1-form $\alpha_{BW}$. The following theorem proves the existence of two closed Reeb orbits for a certain class of prequantization bundles which naturally arise from the Donaldson's construction, see Remark \ref{rmk:fillability criterion}.\\[-1.5ex]

\noindent\textbf{Theorem B.} {\em Let $(P,\xi)$ be a prequantization bundle over a simply connected closed integral symplectic manifold $(Q,\Omega)$ with $\dim Q=2n-2$ and $c_1(P)=k[\Omega]$ for some $k\in\N$. Suppose that $[\Omega]$ is primitive in $H^2(Q;\Z)$ and $c_1(Q)=c[\Omega]$ for some $|c|>n-1$ and that $(P,\xi)$ admits an exact contact embedding into $(W,\om)$ with $c_1(W)|_{\pi_2(W)}=0$ which is $\pi_1$-injective. Then for a nondegenerate contact form $\alpha$, there are two closed Reeb orbits contractible in $W$ and hence in $P$.}\\[-1.5ex]

More generally, $S^1$-orbibundles over symplectic orbifolds provide more examples of contact manifolds. In particular Brieskorn spheres, one of the simplest examples, are of our interest since the positive part of the  equivariant symplectic homology (contact homology) was already computed in \cite{Ust99}. For $a=(a_0,\dots,a_n)\in\N^{n+1}$, we define
$$
V_\epsilon(a)=\big\{(z_0,\dots,z_n)\in\C^{n+1}\,\big|\,z_0^{a_0}+\dots z_n^{a_n}=\epsilon\big\}
$$
which is singular when $\epsilon=0$. Then a 1-form $\alpha_a=\frac{i}{8}\sum_{j=0}^n a_j(z_jd\bar{z}_j-\bar{z}_jdz_j)$ on $\Sigma_a=V_0(a)\cap S^{2n+1}$ is a contact form. We call $(\Sigma_a,\xi_a:=\ker\alpha_a)$ a {\em Brieskorn manifold}. When $n$ is odd and $a_0\equiv\pm1$ mod 8 and $a_1=\cdots a_n=2$, $\Sigma_a$ is diffeomorphic to $S^{2n-1}$ and called a {\em Brieskorn sphere}. As mentioned, Brieskorn manifolds are generalized example of prequantization bundles. Indeed, all Reeb flows of $(\Sigma_a,\alpha_a)$ are periodic and thus Brieskorn manifolds can be interpreted as principal circle bundles over symplectic orbifolds. Furthermore a Brieskorn manifold is Weinstein fillable and in fact a  filling symplectic manifold is $V_\epsilon$ with $\epsilon\neq0$. We refer to \cite[Section 7.1]{Gei08} for detailed explanation about Brieskorn manifolds.\\[-1.5ex]

\noindent\textbf{Theorem C.} {\em Brieskorn spheres with nondegenerate contact forms have two closed Reeb orbits.}\\[-1.5ex]

Except 3-dimensional displaceable case, we only can find two closed Reeb orbits but we do not think that this lower bound is optimal. For instance, it is interesting to ask: 
\begin{Qu*}
{\em Can one find more than two closed Reeb orbits on Brieskorn spheres with nonperiodic contact forms?}
\end{Qu*}

\subsubsection*{Acknowledgments} { I would like to thank Otto van Koert for fruitful discussion. His comments and suggestions led to Theorems B and C of the present paper. I also thank my advisor Urs Frauenfelder for consistent help. Many thanks to Peter Albers and Universit\"at M\"unster  for their warm hospitality. Finally, I thank the referee for careful reading of the manuscript and the valuable comments. This work is supported by the NRF grant (Nr. 2010-0007669) and by the SFB 878-Groups, Geometry, and Actions.}

\section{$S^1$-equivariant symplectic homology} 

\subsection{Borel type construction}\quad\\[-1.5ex]

$S^1$-equivariant symplectic homology theory was first introduced in \cite{Vit99}. Recently $S^1$-equivariant symplectic homology theory was rigorously studied and written up in \cite{BO09b,BO12b}. In the present paper following \cite{Vit99,BO09a,BO09b} we use the Borel type construction of $S^1$-equivariant (Morse-Bott) symplectic homology and refer to \cite{BO12b} for other constructions, their equivalences, and applications.

Let $(\Sigma,\xi)$ be a contact manifold which admits an exact contact embedding into a symplectic manifold $(W,d\lambda)$ that is convex at infinity. Consider a nondegenerate contact from $\alpha$ on $\Sigma$ such that $\alpha-\lambda|_\Sigma$ is exact. Upto adding a compactly supported exact 1-form to $\lambda$ we can assume that $\alpha=\lambda|_\Sigma$, see \cite[page 253]{CF09}. We denote by $W_0$ the bounded region of $W\setminus\Sigma$. A neighborhood of $\Sigma$ in $W_0$ can be trivialized by the Liouville flow as $(\Sigma\x(1-\epsilon,1],d(r\alpha))$. The symplectic completion of $(W_0,d\lambda)$ is defined by
$$
\widehat W=W_0\cup_{\p W_0} \Sigma\x[1,\infty), \quad\widehat\om=\left\{\begin{array}{ll} d\lambda &\quad\textrm{on}\quad W_0,\\[1ex] d(r\alpha)&\quad\textrm{on}\quad \Sigma\x[1,\infty).\end{array}\right.
$$
We denote by $\widehat\lambda$ a primitive 1-form of $\widehat\om$ which is $\lambda$ on $W_0$ and $r\alpha$ on $\Sigma\x[1,\infty)$.

We choose an almost complex structure $J$ on $W_0$ which is compatible with $\om$ and preserves the contact hyperplane field $\ker\alpha\subset T\Sigma$. We extend this on $\widehat W$ so that $J$ is invariant under the $\R_+$-action and $Jr\p_r=R$ and $JR=-r\p_r$. Such a $J$ will be called admissible. Here $R$ is the Reeb vector field associated to $\alpha$ and we denote by $\varphi^t_R$ the flow of $R$. The {\em Hamiltonian vector field} $X_H$ associated to a Hamiltonian function $H\in C^\infty(\widehat W)$ is defined by $i_{X_H}\widehat\om=-dH$.

Since we have assumed that $(\Sigma,\alpha)$ is nondegenerate, periods of closed Reeb orbits on $(\Sigma,\alpha)$ form a discrete subset $\Spec(\Sigma,\alpha)$ in $\R_+:=(0,\infty)$. We define a {\em family of $S^1$-invariant admissible Hamiltonians} $K_\tau\in C^\infty(\widehat W\x S^{2N+1})$, $\tau\in\R_+\setminus\Spec(\Sigma,\alpha)$ to have the following properties:
\begin{itemize}
\item[(i)] $K_\tau(x,z)=H_\tau(x)-f(z)$ for $(x,z)\in\widehat W\x S^{2N+1}$;
\item[(ii)] $f\in C^\infty(S^{2N+1})$ is Morse-Bott and invariant under the $S^1$-action on $S^{2N+1}$ given by $\theta\cdot z:=e^{2\pi i\theta}z$ for $\theta\in S^1$, $z\in S^{2N+1}\subset\C^{N+1}$;
\item[(iii)] On $W_0$, $H_\tau<0$ and is a $C^2$-small Morse function;
\item[(iv)] On $\Sigma\x[1,\infty)$, $H_\tau(x)=h_\tau(r)$ for some strictly increasing function $h_\tau:[1,\infty)\to\R_+$ satisfying $h_\tau''(r)>0$ on $(1,r_0)$ for some $r_0>0$;
\item[(v)] $h_\tau(r)=\tau r-h_0$ for some $h_0>\tau r_0$ on $\Sigma\x(r_0,\infty)$.
\end{itemize}

With a family of admissible Hamiltonians $K_\tau\in C^\infty(\widehat W\x S^{2N+1})$, we define a family of action functionals $\AA_{K_\tau}^N:\LLL_{\widehat W}\x S^{2N+1}\to\R$, where $\LLL_{\widehat W}$ denotes the space of contractible loops in $\widehat W$, by
$$
\AA_{K_\tau}^N(v,z):=\int_{S^1}v^*\widehat\lambda-\int_{S^1}K_\tau(v,z)dt.
$$
We note that this action functional is $T^2$-invariant with respect to the following torus action on $\LLL_{\widehat W}\x S^{2N+1}$,
$$
(\theta_1,\theta_2)\cdot\big(v(t),z\big):=\big(v(t-\theta_1),e^{2\pi i\theta_2}z\big),\quad (\theta_1,\theta_2)\in T^2,\;t\in S^1,\;z\in S^{2N+1}\subset\C^{N+1}.
$$
That is $\AA_{K_\tau}^N((\theta_1,\theta_2)\cdot(v,\lambda))=\AA_{K_\tau}^N(v,\lambda)$, and thus the critical points set $\Crit\AA^N_{K_\tau}$ is $T^2$-invariant as well. Here $(v,z)\in\Crit\AA^N_{K_\tau}$ if and only if
$$
\left\{\begin{aligned}\frac{d}{dt}v-X_{H_\tau}(v)=0,\\[1ex]
d_zf(z)=0.
\end{aligned}\right.
$$
There are two types of critical points of $\AA^N_{K_\tau}$: 
\begin{itemize}
\item[1)] $(v,z)$ where $z\in\Crit f$ and where $v\equiv x\in W_0$ is a critical point of the Morse function $H_\tau|_{W_0}$;
\item[2)] $(v,z)$ where $z\in\Crit f$ and where $v\in\LLL_{\widehat W}$ lying on levels $\Sigma\x\{r\}$, $r\in(1,r_0)$ is a solution of
\beq\label{eq:nontirivial critical point eq}
\frac{d}{dt} v=h'_\tau(\pi\circ v)R(v).
\eeq
Here $\pi:\Sigma\x[1,\infty)\to[1,\infty)$ is the projection to the second factor.
\end{itemize}
The second type solutions correspond to closed Reeb orbits with period $h'_\tau(\pi\circ v)\in(0,\tau)$. They are transversally nondegenerate (see \cite[Lemma 3.3]{BO09a}), i.e. 
$$
\ker [d\varphi_{R}^{h'_\tau(\pi\circ v)}(v(0))-\one_{T_{v(0)}\widehat W}]=\Big\langle\frac{d}{dt}v(0)\Big\rangle.
$$ 
We define the diagonal $S^1$-action on $\LLL_{\wh W}\x S^{2N+1}$ by
$$
\theta\cdot\big(v(t),z\big):=\big(v(t-\theta),e^{2\pi i\theta}z\big),\quad t\in S^1,\;z\in S^{2N+1}\subset\C^{N+1}.
$$
 Although we will divide out the diagonal $S^1$-action on $\Crit\AA_{K_\tau}^N$, we are still in a Morse-Bott situation due to the presence of another $S^1$-action. See \cite{Bou02,Fra04} for Floer homology in the Morse-Bott situation. Thus we choose an additional Morse-Bott function $q:\Crit\AA_{K_\tau}^N\to\R$ invariant under the diagonal $S^1$-action such that $q/S^1:\Crit\AA_{K_\tau}^N/S^1\to\R$ is Morse. We denote an $S^1$-family of critical points of $q$ containing $(v,z)$ by 
$$
S_{(v,z)}:=\{\theta\cdot(v,z)\,|\,(v,z)\in\Crit q,\;\theta\in S^1\}.
$$

Suppose that $c_1(W)|_{\pi_2(W)}=0$. We define the index
$
\mu:\Crit q\to \Z
$
by
$$
\mu(v,z)=\mu_{CZ}(v)+\ind_{f}(z)+\ind_q(v,z)
$$
where $\mu_{CZ}$ and $\ind$ stand for the Conley-Zehnder index and the Morse index respectively, see \cite{BO09b,BO12a}. In particular if $(v,z)\in\Crit q$ is of the first type, i.e. $v=x\in W_0$,
$$
\mu(v,z)=\mu_{CZ}(v)+\ind_{f}(z)+\ind_q(v,z)=\ind_{H_\tau|_W}(x)-\frac{\dim W}{2}+\ind_{f}(z).
$$
In this case, $q|_{S_{(v,z)}}$ is a function on $S^1$ invariant under the rotation and thus $\ind_q=0$.

We also define a {\em family of $S^1$-invariant admissible compatible almost complex structures} $J=(J_\lambda^t)$, $\lambda\in S^{2N+1}$, $t\in S^1$ such that $J_\lambda^t$ is an admissible compatible  almost complex structure on $(\widehat W,\widehat\om)$ and is $S^1$-invariant, i.e. $J_\lambda^t=J^{t-\theta}_{\theta\lambda}$ for $\theta\in S^1$. Together with a Riemannian metric $g$ on $S^{2N+1}$ invariant under the diagonal $S^1$-action, a metric on $\LLL_{\widehat W}\x  S^{2N+1}$ is defined by
$$
m_{(v,z)}((\xi_1,\zeta_1),(\xi_2,\zeta_2)):=\int_{S^1}\om(\xi_1,J\xi_2)dt+g(\zeta_1,\zeta_2),\quad (\xi_i,\zeta_i)\in T_v\LLL_{\widehat W}\x T_z S^{2N+1}.
$$
A (negative) gradient flow line $(u,y):C^\infty(\R\x S^1,\widehat W)\x C^\infty(\R,S^{2N+1})$ of $\AA_{K_\tau}^N$ with respect to the metric $m$ is a solution of
\beq\label{eq:gradient flow eq2}
\left\{\begin{aligned}\p_s u+J^t_{y(s)}(\p_tu-X_{H_\tau}(u))=0,\\[1ex]
\p_s y+\nabla_gf(y)=0.
\end{aligned}\right.
\eeq
We denote the moduli space of gradient flow lines with $m$ cascades (gradient flow lines of $\AA_{K_\tau}^N$) from $S_{(v_-,z_-)}$ to $S_{(v_+,z_+)}$ for $(v_\pm,z_\pm)\in\Crit q$ by
$$
\widehat\MM_m(S_{(v_-,z_-)},S_{(v_+,z_+)})=\widehat\MM_m(S_{(v_-,z_-)},S_{(v_+,z_+)};K_\tau,q,J,g).
$$
That is, $\widehat\MM_m(S_{(v_-,z_-)},S_{(v_+,z_+)})=\{(\textbf{u,y,t})=((u_i,y_i)_{1\leq i\leq m},(t_i)_{1\leq i\leq m-1}\}$ such that 
\begin{itemize}
\item[(1)] $(u_i,y_i)$s are solutions of \eqref{eq:gradient flow eq2};
\item[(2)] $(u_1,y_1)$ and $(u_m,y_m)$ satisfy
$$
\lim_{s\to-\infty}(u_1(s),y_1(s))\in W^u(S_{(v_-,z_-)};q),\quad \lim_{s\to\infty}(u_m(s),y_m(s))\in W^s(S_{(v_+,z_+)};q)
$$
where $W^s(S_{(v_+,z_+)};q)$ (resp. $W^u(S_{(v_-,z_-)};q)$) is the (un)stable set of a critical manifold $S_{(v_+,z_+)}$ $(S_{(v_-,z_-)})$ of $q$;
\item[(3)] $t_i\in\R_+$ and $(u_i,y_i)$, $i\in\{1,\dots m-1\}$ satisfy
$$
\lim_{s\to-\infty}(u_{i+1}(s),y_{i+1}(s))=\phi_q^{t_i}\big(\lim_{s\to\infty}(u_{i}(s),y_{i}(s))\big)
$$
where $\phi_q^t$ is the negative gradient flow of $q$.
\end{itemize}
We divide out the $\R^m$-action on $\widehat\MM(S_{(v_-,z_-)},S_{(v_+,z_+)})$ defined by shifting cascades $(\textbf{u,y})$ in the $s$-variable. Then we have the moduli space of gradient flow lines with unparametrized cascades denoted by
$$
\MM(S_{(v_-,z_-)},S_{(v_+,z_+)}):=\widehat\MM(S_{(v_-,z_-)},S_{(v_+,z_+)})/\R^m
$$
We note that solutions of \eqref{eq:gradient flow eq2} are equivariant under the diagonal $S^1$-action, that is if $(u,y)$ solves \eqref{eq:gradient flow eq2}, then so does $\theta\cdot(u,y)$. Since $q$ is invariant under the diagonal $S^1$-action as well, the moduli space $\MM(S_{(v_-,z_-)},S_{(v_+,z_+)})$ carries a free $S^1$-action. We denote the quotient by
$$
\MM_{S^1}(S_{(v_-,z_-)},S_{(v_+,z_+)}):=\MM(S_{(v_-,z_-)},S_{(v_+,z_+)})/S^1.
$$
It turns out that this moduli space is a smooth manifold of dimension
$$
\dim\MM_{S^1}(S_{(v_-,z_-)},S_{(v_+,z_+)})=\mu(v_-,z_-)-\mu(v_+,z_+)-1
$$
for a generic $J$. For the detailed transversality analysis we refer to \cite{BO10}. We define the $S^1$-equivariant chain group $SC_*^{S^1,N}(K_\tau)$ by the $\Q$-vector space generated by $S^1$-families of critical points of $\AA_{K_\tau}^N$ of $\mu$-index $*\in\Z$. 
$$
SC_*^{S^1,N}(K_\tau)=\bigoplus_{\stackrel{{(v,z)}\in\Crit q}{\mu(v,z)=*}}\Q\langle S_{(v,z)}\rangle.
$$
The boundary operator $\p^{S^1}:SC_*^{S^1,N}(K_\tau)\to SC_{*-1}^{S^1,N}(K_\tau)$ is defined by
$$
 \p^{S^1}(S_{(v_-,z_-)})=\sum_{\stackrel{{(v_+,z_+)}\in\Crit q}{\mu(v_-,z_-)-\mu(v_+,z_+)=1}}\!\!\!\!\!\#\MM_{S^1}(S_{(v_-,z_-)},S_{(v_+,z_+)}) \;S_{(v_+,z_+)}
$$
where by $\#$ we mean a signed (via the coherent orientations) count of the number of the finite set $\MM_{S^1}(S_{(v_-,z_-)},S_{(v_+,z_+)})$.
Then $\p^{S^1}\circ\p^{S^1}=0$ and thus we are able to define
$$
HF_*^{S^1,N}(K_\tau)=H_*(SC^{S^1,N}(K_\tau),\p^{S^1}).
$$
Taking direct limits, the {\em $S^1$-equivariant symplectic homology} of $(W_0,\om)$ is defined by
$$
SH_*^{S^1}(W_0):=\lim_{N\to\infty}\lim_{\tau\to\infty}HF_*^{S^1,N}(K_\tau)
$$
As the notation indicates, the homology depends only on $(W_0,\om)$. Here the direct limit of $N$ with respect to the embedding $S^{2N-1}\into S^{2N+1}$ is taken as follows. A Morse-Bott function $f_{N_1}\in C^\infty(S^{2N_1-1})$ extends to a Morse-Bott  function $f_{N_2}\in C^\infty(S^{2N_2-1})$ for $N_2>N_1$ so that $f_{N_2}(z,y)=f_{N_1}(z)+|y|^2$ in a tubular neighborhood of $S^{2N_1-1}$ in $S^{2N_2+1}$ where $y$ is the normal coordinate. This induces a direct system over $N\in\N$.  In order to define the negative/positive part of $S^1$-equivariant symplectic homology, we consider
$$
SC_*^{S^1,-,N}(K_\tau)=
\!\!\!\!\!\bigoplus_{\stackrel{{(v,z)}\in\Crit q}{\AA_{K_\tau}^N(v,z)<\epsilon}}\!\!\!\!\!\Q\langle S_{(v,z)}\rangle,\quad SC_*^{S^1,+,N}(K_\tau)=SC_*^{S^1,N}(K_\tau)/SC_*^{S^1,-,N}(K_\tau)
$$
where $\epsilon<\min\Spec(\Sigma,\alpha)$. That is, $SC^{S^1,-,N}$ resp. $SC^{S^1,+,N}$ is generated by type 1) resp. type 2) critical points of $\AA_{K_\tau}^N$, see the property (v) of $K_\tau$. Since the action values decrease along negative gradient flow lines, there exist associated boundary operators $\p^{S^1}_\pm$ induced by $\p^{S^1}$, and hence we are able to define $SH_*^{S^1,\pm}(W_0)$ the negative/positive part of the $S^1$-equivariant symplectic homology of $(W_0,\om)$.

\subsection{Morse-Bott spectral sequence}\quad\\[-1.5ex]

This subsection is devoted to observe that bad orbits do not contribute to $S^1$-equivariant symplectic homology which is certainly expected to be true in $S^1$-equivariant theory. This is clearly true for contact homology and proofs of the present paper may become more transparent if we use contact homology. Nevertheless we use $S^1$-equivariant symplectic homology since contact homology is still problematic due to transversality issues. To see this feature in $S^1$-equivariant symplectic homology, we use a Morse-Bott spectral sequence. We refer to \cite{Fuk96} for detailed explanation about the Morse-Bott spectral sequence. We should mention that this approach was used by \cite{FSvK12} to study the non-existence of a displaceable exact contact embedding of Brieskorn manifolds.

There is a Morse-Bott spectral sequence which converges to $SH_*^{S^1,+}(W_0)$ whose first page $(E^1,d^1)$ is given by
$$
E_{i,j}^1=\bigoplus_{\stackrel{\gamma\in\PP;}{\mu_{CZ}(\gamma)=i}}H_j(\gamma\x_{S^1}ES^1;\OO_\gamma)
$$
where $\OO_\gamma$ is a orientation rational bundle of $\gamma$ and where $\PP$ is the set of nonconstant closed orbits of $X_{H_\tau}$. We note that if $\gamma$ is a $k$-fold cover of a simple closed orbit, $\gamma\x_{S^1}ES^1$ is the infinite dimensional lens space $B\Z_k$.
We recall that parities of Conley-Zehnder indices of all even/odd multiple covers of a simple closed orbits are the same, i.e. 
$$
\mu_{CZ}(\gamma^{2k})\equiv\mu_{CZ}(\gamma^{2\ell}),\quad\mu_{CZ}(\gamma^{2k+1})\equiv\mu_{CZ}(\gamma^{2\ell+1})\quad\textrm{mod 2},\;\;k,\,\ell\in\N.
$$
See \cite{Vit89,Ust99} for instance. A closed orbit $\gamma$ is called {\em bad} if $\gamma=\gamma_0^k$ for a simple closed orbit $\gamma_0$ and some $k\in\N$ (if fact, $k\in 2\N$) and the parity of $\mu_{CZ}(\gamma)$ disagrees with the parity of $\mu_{CZ}(\gamma_0)$. A closed orbit which is not bad is called {\em good}.  If $\gamma$ is a good orbit, the twist bundle $\OO_\gamma$ is trivial and $H_j(B\Z_k;\Q)$ vanishes except degree zero. If $\gamma$ is a bad orbit, $\OO_\gamma$ is the orientation bundle of $B\Z_k$ and $H_j(B\Z_k;\OO_\gamma)$ vanishes for every degree, see \cite{Vit89}. Therefore only good closed orbits contribute to the first page of the Morse-Bott spectral sequence and thus to the positive part of $S^1$-equivariant symplectic homology as well. Note that $E^1_{i,j}=0$ if $j\neq 0$ and hence the Morse-Bott spectral sequence stabilizes at the second page, i.e. $SH^{S^1,N}=H(E^1,d^1)$.

\begin{Rmk}\label{rmk:contribution}
As the Morse-Bott spectral sequence shows, only $(v,z)\in\Crit q$ with $\mu_{CZ}(v)=*$ (i.e. $\ind_{f}(z)=\ind_{q}(v,z)=0$) contributes to $SH_*^{S^1,N}$. 
\end{Rmk}

\subsection{Resonance identity}\quad\\[-1.5ex]

Following \cite{vK05} we define the mean Euler characteristic by
$$
\chi_m(W_0):=\lim_{N\to\infty}\frac{1}{N}\sum_{\ell=-N}^{N}(-1)^\ell\dim SH_\ell^{S^1,+}(W_0)
$$
if the limit exists. The limit exists if $(W_0,\om)$ is homologically bounded, i.e. $\dim SH_\ell^{S^1,+}(W_0)$, $\ell\in\Z$ are uniformly bounded. From the observation of the previous subsection we know the first page of the Morse-Bott spectral sequence converging to $SH^{S^1,+}(W_0)$ is given by
$$
E^1_{i,j}=\left\{\begin{array}{ll}\bigoplus_{\stackrel{\gamma\in\mathfrak{G}}{\mu_{CZ}(\gamma)=i}}\!\!\!\Q,\quad & j=0,\\[1ex]
0 & j\neq0
\end{array}\right.
$$
where $\mathfrak{G}$ is the set of good closed orbits contractible in $W$. Since the mean Euler characteristic of $E^1$ is the same as that of $SH^{S^1,+}(W_0)$, we have
$$
\chi_m(W_0)=\lim_{N\to\infty}\frac{1}{N}\sum_{\gamma\in\mathfrak{G}_N}(-1)^{\mu_{CZ}(\gamma)}
$$
where $\mathfrak{G}_N$ is the set of good closed orbits of Conley-Zehnder indices in $[-N,N]$. Let $\Delta(\gamma)$ be the mean Conley-Zehnder index of $\gamma$ which will be explained in the next section. From $|\mu_{CZ}(\gamma^k)-k\Delta(\gamma)|<n-1$, see \cite{SZ92},
we have  
$$
k\Delta(\gamma)-(n-1)<\mu_{CZ}(\gamma^k)<k\Delta(\gamma)+(n-1),\quad \Delta(\gamma)=\lim_{k\to\infty}\frac{\mu_{CZ}(\gamma^k)}{k}.
$$
Suppose that $\Delta(\gamma)>0$. Then there exist constants $C_1(k),\,C_2(k)\in[-n+1,n-1]$ such that $\mu_{CZ}(\gamma^k)\in[-N,N]$ if and only if 
\beq\label{eq:index ineq}
\max\bigg\{1,\frac{-N+C_1(k)}{\Delta(\gamma)}\bigg\}\leq k\leq\frac{N+C_2(k)}{\Delta(\gamma)}.
\eeq
We recall that nonconstant closed orbits of $X_{H_\tau}$ correspond to closed Reeb orbits after reparametrization and their Conley-Zehnder indices are the same. We abbreviate by $\mathfrak{G}_s$ the set of simple closed Reeb orbits contractible in $W$ whose multiple covers are all good and by $\mathfrak{B}_s$ the set of simple closed Reeb orbits contractible in $W$ whose even multiple covers are bad. Then \eqref{eq:index ineq} implies the following proposition. This idea is essentially identical to \cite{GK10}.

\begin{Prop}\label{prop:resonance identity}
Let $(W_0,\om)$ be homologically bounded. Assume that there are only finitely many simple closed Reeb orbits on $(\Sigma,\alpha)$ and their mean Conley-Zehnder indices are positive. Then we have 
\beq
\chi_m(W_0)=\sum_{\gamma_g\in\mathfrak{G}_s}\frac{(-1)^{\mu_{CZ}(\gamma_g)}}{\Delta(\gamma_g)}+\sum_{\gamma_b\in\mathfrak{B}_s}\frac{(-1)^{\mu_{CZ}(\gamma_b)}}{2\Delta(\gamma_b)}.
\eeq
\begin{proof}
From \eqref{eq:index ineq}, we have
\bean
\chi_m(W_0)&=\lim_{N\to\infty}\frac{1}{N}\sum_{\gamma\in\mathfrak{G}_N}(-1)^{\mu_{CZ}(\gamma)}\\
&=\lim_{N\to\infty}\frac{1}{N}\bigg\{\sum_{\gamma_g\in\mathfrak{G}_{s}}(-1)^{\mu_{CZ}(\gamma_g)}\frac{N}{\Delta(\gamma_g)}+{\sum_{\gamma_b\in\mathfrak{B}_{s}}(-1)^{\mu_{CZ}(\gamma_b)}\frac{1}{2}\frac{N}{\Delta(\gamma_b)}}+O(1)\bigg\}\\
&=\sum_{\gamma_g\in\mathfrak{G}_s}\frac{(-1)^{\mu_{CZ}(\gamma_g)}}{\Delta(\gamma_g)}+\sum_{\gamma_b\in\mathfrak{B}_s}\frac{(-1)^{\mu_{CZ}(\gamma_b)}}{2\Delta(\gamma_b)}.
\eea
\end{proof}
\end{Prop}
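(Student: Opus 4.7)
The plan is to start from the expression
\[
\chi_m(W_0) = \lim_{N\to\infty}\frac{1}{N}\sum_{\gamma\in\mathfrak{G}_N}(-1)^{\mu_{CZ}(\gamma)}
\]
already derived from the Morse--Bott spectral sequence, and rewrite the sum by grouping the iterates of each of the finitely many simple closed Reeb orbits. Since nonconstant $X_{H_\tau}$-orbits correspond bijectively to closed Reeb orbits with the same Conley--Zehnder index, $\mathfrak{G}_N$ decomposes as a disjoint union, over simple orbits $\gamma_0$, of those good iterates $\gamma_0^k$ whose Conley--Zehnder index lies in $[-N,N]$.

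Next I would separate the contribution of each simple orbit $\gamma_0$ using the two key facts: (a) the parity of $\mu_{CZ}(\gamma_0^k)$ depends only on the parity of $k$; and (b) an iterate $\gamma_0^k$ is good iff its Conley--Zehnder parity agrees with that of $\gamma_0$. Hence for $\gamma_g\in\mathfrak{G}_s$ every iterate contributes $(-1)^{\mu_{CZ}(\gamma_g)}$, while for $\gamma_b\in\mathfrak{B}_s$ only the odd iterates contribute (with sign $(-1)^{\mu_{CZ}(\gamma_b)}$), the even ones being bad. In particular the sign is constant along the tower of good iterates, which makes the sum over $k$ a pure count.

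Then I would count those $k\ge 1$ for which $\mu_{CZ}(\gamma_0^k)\in[-N,N]$ using the quantitative bound
\[
\max\!\left\{1,\tfrac{-N+C_1(k)}{\Delta(\gamma_0)}\right\}\le k\le \tfrac{N+C_2(k)}{\Delta(\gamma_0)},\qquad C_1,C_2\in[-n+1,n-1],
\]
from \eqref{eq:index ineq}. Since $\Delta(\gamma_0)>0$, the lower bound is trivially $1$ for $N$ large, so the number of admissible $k$ is $N/\Delta(\gamma_0)+O(1)$ for $\gamma_g\in\mathfrak{G}_s$, and $\tfrac{1}{2}\,N/\Delta(\gamma_b)+O(1)$ for $\gamma_b\in\mathfrak{B}_s$ (only odd $k$). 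Plugging back gives
\[
\sum_{\gamma\in\mathfrak{G}_N}(-1)^{\mu_{CZ}(\gamma)}
=\sum_{\gamma_g\in\mathfrak{G}_s}(-1)^{\mu_{CZ}(\gamma_g)}\frac{N}{\Delta(\gamma_g)}
+\sum_{\gamma_b\in\mathfrak{B}_s}(-1)^{\mu_{CZ}(\gamma_b)}\frac{N}{2\Delta(\gamma_b)}+O(1),
\]
and dividing by $N$ and letting $N\to\infty$ yields the claimed identity.

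The main subtlety, rather than an obstacle, is to justify passing from the abstract definition of $\chi_m$ to a purely orbit-theoretic count: this uses homological boundedness (so the limit exists), finiteness of the set of simple orbits (so the $O(1)$-errors really are bounded uniformly in $N$), and the Morse--Bott spectral sequence observation that bad orbits contribute nothing to $SH^{S^1,+}_*(W_0)$. Once these three ingredients are in place, the proof reduces to the arithmetic counting described above.
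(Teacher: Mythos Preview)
Your proposal is correct and follows essentially the same approach as the paper: both start from the orbit-theoretic expression for $\chi_m(W_0)$, group the good iterates by their underlying simple orbit, use \eqref{eq:index ineq} to count the iterates with index in $[-N,N]$ as $N/\Delta(\gamma_0)+O(1)$ (halved for $\gamma_b\in\mathfrak{B}_s$), and pass to the limit. Your write-up is more explicit about why the sign is constant along each tower of good iterates and why only odd iterates survive for $\gamma_b$, but the argument is the same.
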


\section{Index Iteration formula}

In the present section, we first recall the Conley-Zehnder index of a closed Reeb orbit and then briefly explain how the Conley-Zehnder index varies under iteration. For detailed explanation we refer to Long's book \cite{Lon02}, see also \cite{CZ84,SZ92,Sal99,Gut12}. For the sake of compatibility, we will adopt the notation and terminology of \cite{Lon02}.

Let $\Sp(2n)$ be the space of $2n\x 2n$ symplectic matrices and $\Sp(2n)^*$ be a subset which consists of nondegenerate elements, i.e.
$$
\Sp(2n)^*:=\{M\in\Sp(2n)\,|\,\det(M-\one_{2n})\neq0\}.
$$
We observe that $\Sp(2n)^*=\Sp(2n)^+\cup\Sp(2n)^-$ where
$$
\Sp(2n)^\pm:=\{M\in\Sp(2n)\,|\,\pm\det(M-\one_{2n})>0\}.
$$
An element $M\in\Sp(2n)$ is called {\em elliptic} if the spectrum $\sigma(M)$ is contained in the unit circle $\U:=\{z\in\C\,|\,|z|=1\}$. Since we are interested in the nondegenerate case, i.e. $M\in\Sp(2n)^*$, $\sigma(M)\subset\U\setminus\{1\}$. The {\em elliptic height} of $M$ is defined by the total algebraic multiplicity of all eigenvalues of $M$ in $\U$ and denoted by $e(M)$. On the other hand if $\sigma(M)\cap\U=\emptyset$, i.e. $e(M)=0$, $M$ is called {\em hyperbolic}.

We abbreviate 
$$
\PP(2n,\tau)^*:=\{\Psi:[0,\tau]\to\Sp(2n)\,|\,\Psi(0)=\one_{2n},\;\Psi(\tau)\in\Sp(2n)^*\}.
$$
For $\Psi\in\PP(2n,\tau)^*$, we join $\Psi(\tau)\in\Sp(2n)^\pm$ to 
$$
-\one_{2n} \quad\textrm{or}\quad \mathrm{diag}(2,1/2,-1,\dots,-1)
$$ 
by a path $\psi:[0,1]\to\Sp(2n)^*$. We recall that there exists a continuous map
$$
\rho:\Sp(2n)\pf S^1
$$
which is uniquely characterized by the naturality, the determinant, and the normalization properties. The map $\rho$ induces an isomorphism between fundamental groups $\pi_1(\Sp(2n))$ and $\pi_1(S^1)$. For any path $r:[0,c]\to\Sp(2n)$, we choose a function $\alpha_r:[0,c]\to\R$ such that $\rho(r(t))=e^{i\alpha_r(t)}$. Then the Maslov-type index for a path $\Psi\in\PP(2n,\tau)^*$ is defined by
$$
\mu(\Psi):=\frac{\alpha_\Psi(\tau)-\alpha_\Psi(0)}{\pi}+\frac{\alpha_\psi(1)-\alpha_\psi(0)}{\pi}\in\Z.
$$
In particular, we denote
$$
\Delta(\Psi):=\frac{\alpha_\Psi(\tau)-\alpha_\Psi(0)}{\pi}\in\R.
$$
and call the {\em mean index} of $\gamma$. We remark that since $\Sp(2n)^*$ is simply connected, both $\mu$ and $\Delta$ are independent of the choice of a path $\psi$. 

Now we associate this Maslov-type index to each closed Reeb orbit contractible in a symplectic filling. Let $\gamma$ be a $\tau$-periodic closed Reeb orbit on $(\Sigma,\alpha,\xi)$ contractible in $(W,\om)$. We take a filling disk $\bar\gamma:D^2\to W$ such that $\bar\gamma|_{\p D^2}=\gamma$. Then a symplectic trivialization $\Phi:\bar\gamma^*\xi\to D^2\x\R^{2n-2}$ and the linearized flow $T\phi_R^t(\gamma(0))|_\xi$ along $\gamma$ induce a path of symplectic matrices 
$$
\Psi_\gamma(t):=\Phi(\gamma(t))\circ T\phi_R^t(\gamma(0))|_\xi\circ\Phi^{-1}(\gamma(0)):[0,\tau]\to\Sp(2n-2).
$$
If $\gamma$ is nondegenerate, $\Psi_\gamma\in\PP(2n-2,\tau)^*$ and we are able to define the {\em Conley-Zehnder index} of $\gamma$ by
$$
\mu_{CZ}(\gamma):=\mu(\Psi_\gamma).
$$
The mean Conley-Zehnder index $\Delta(\gamma)$ is also defined as $\Delta(\Psi_\gamma)$. In order to prove our main results we need to study the Conley-Zehnder indices of $\gamma^k$, $k\in\N$ where 
$$
\gamma^k:[0,k\tau]\to\Sigma,\quad \gamma^k(t):=\gamma(t-j\tau)\;\;\textrm{for}\;\; t\in[j\tau,(j+1)\tau],\;\;1\leq j\leq k-1.
$$
We define the {\em $k$-th iteration} $\Psi^k\in\PP(2n-2,k\tau)^*$ of $\Psi\in\PP(2n-2,\tau)^*$ by
$$                    
\Psi^k(t):=\Psi(t-j\tau)\Psi(\tau)^j,\quad t\in[j\tau,(j+1)\tau],\;\;1\leq j\leq k-1
$$
so that $\Psi_{\gamma^k}=\Psi_\gamma^k$ and $\mu_{CZ}(\gamma^k)=\mu(\Psi_\gamma^k)$. 

Let $M_1$ resp. $M_2$ be $2i\x 2i$ resp. $2j\x 2j$ matrix of the square block form as below.
$$
M_1=\left(\begin{array}{cc} A_1& B_1\\
C_1 & D_1 \end{array}\right),\quad
M_2=\left(\begin{array}{cc}  A_2 & B_2\\
C_2 & D_2 \end{array}\right).
$$
The $\diamond$-product of $M_1$ and $M_2$ is a $2(i+j)\x 2(i+j)$ matrix defined by
$$
M_1\diamond M_2:=\left(\begin{array}{cccc} A_1 & 0 & B_1 & 0\\
0 & A_2 & 0 & B_2\\
C_1 & 0 & D_1 & 0\\
0 & C_2 & 0 & D_2  \end{array}\right).
$$
The following symplectic matrices are called {\em basic normal forms}.
\bean
\bullet\quad & D(\pm 2)=\left(\begin{array}{cc} \pm2 &  0\\
0 & \pm 1/2 \end{array}\right),\\[1ex]
%\quad \lambda\in\R\setminus\{0\},
\bullet\quad &N_1(\lambda,b)=\left(\begin{array}{cc} \lambda &  b\\
0 & \lambda \end{array}\right),\quad \lambda=\pm1,\;b=\pm1,\,0,\\[1ex]
\bullet\quad &R(\theta)=\left(\begin{array}{cc} \cos\theta & -\sin\theta\\
\sin\theta & \cos\theta \end{array}\right),\quad \theta\in(0,\pi)\cup(\pi,2\pi),\\[1ex]
\bullet\quad &N_2(\theta,B)=\left(\begin{array}{cc} R(\theta) &  B\\[1ex]
0 & R(\theta) \end{array}\right)\;\textrm{for}\; B=\left(\begin{array}{cc} b_1 &  b_2\\
b_3 & b_4  \end{array}\right),\; \theta\in(0,\pi)\cup(\pi,2\pi),\;b_2\neq b_3.
\eea

We note that $D(\pm 2)$ are basic normal forms for eigenvalues outside $\U$ and $N_1$, $R$, and $N_2$ are basic normal forms for eigenvalues in $\U$. Therefore $e(D)=0$, $e(N_1)=e(R)=2$, and $e(N_2)=4$.

The {\em homotopy set} $\Omega(M)$ of $M\in\Sp(2n)$ is defined by
$$
\Omega(M)=\big\{M'\in\Sp(2n)\,\big|\,\sigma(M')\cap\U=\sigma(M)\cap\U,\; \nu_\lambda(M')=\nu_\lambda(M)\;\;\textrm{for all}\;\;\lambda\in\sigma(M)\cap\U\big\}
$$
where
$$
\nu_\lambda(M):=\dim_\C\ker_\C(M-\lambda\one_{2n}).
$$
We denote by $\Omega^0(M)$ the path connected component of $\Omega(M)$ containing $M$.

\begin{Thm}\label{thm:decomposition into basic normal forms}
For $M\in\Sp(2n)$, there exists a path $h:[0,1]\to\Omega^0(M)$ such that
$$
h(0)=M\quad\textrm{and}\quad h(1)=M_1\diamond\cdots\diamond M_k\diamond M_0
$$
where $M_i$'s, $i\in\{1,\dots,k\}$ are basic normal forms for eigenvalues in $\U$ and $M_0$ is either $D(2)^{\diamond\ell}$ or $D(-2)\diamond D(2)^{\diamond (\ell-1)}$ for some $\ell\in\N$. 
\end{Thm}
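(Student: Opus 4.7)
The strategy is to reduce to a symplectic spectral decomposition of $(\R^{2n},\om_0)$ into $M$-invariant symplectic subspaces and handle the eigenvalues on $\U$ and those off $\U$ separately. Since $M$ is symplectic, its spectrum $\sigma(M)$ is invariant under $\lambda\mapsto\lambda^{-1}$ and $\lambda\mapsto\bar\lambda$. Grouping the generalized eigenspaces accordingly yields an $M$-invariant, symplectically orthogonal splitting $\R^{2n}=V_\U\oplus V_{\mathrm{hyp}}$, where $V_\U$ collects the generalized eigenspaces for $\lambda\in\sigma(M)\cap\U$ and $V_{\mathrm{hyp}}$ the remaining (hyperbolic) ones. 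Each of these further splits into conjugate-reciprocal packets: pairs $\{\lambda,\bar\lambda\}$ with $\lambda\in\U\setminus\{\pm1\}$, blocks for the eigenvalues $\pm1$, real reciprocal pairs $\{\lambda,\lambda^{-1}\}$ with $\lambda\in\R\setminus\U$, and complex quadruples $\{\lambda,\lambda^{-1},\bar\lambda,\bar\lambda^{-1}\}$ with $\lambda\notin\U\cup\R$. First I would establish this decomposition and observe that deforming each summand independently, within its own symplectic subspace and subject to the homotopy-set constraints, produces a path in $\Om^0(M)$.

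For each summand of $V_\U$, I would invoke a Williamson-type normal form result (see Long \cite{Lon02}, \S1.8) which shows that $M|_{V_\U}$ is path-connected in $\Om^0$ of the restricted symplectic group to a $\diamond$-product of basic normal forms $N_1(\pm1,b)$ (for $\lambda=\pm1$) and $R(\theta)$ or $N_2(\theta,B)$ (for $\lambda=e^{\pm i\theta}\in\U\setminus\{\pm1\}$), depending on the Jordan structure and the sign of the quadratic form $i\om(\cdot,M\cdot)$ restricted to the relevant eigenspace. By construction these homotopies fix $\sigma(M)\cap\U$ pointwise and preserve each $\nu_\lambda$, hence lie in $\Om^0$.

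For the hyperbolic part the constraint imposed by $\Om^0$ is vacuous on the spectrum off $\U$: any continuous deformation through symplectic matrices whose spectrum avoids $\U$ is admissible. One checks directly that a real reciprocal pair with eigenvalues of the same sign deforms to $D(2)$ or $D(-2)$ (according to the sign), while a complex hyperbolic quadruple deforms to $D(2)\diamond D(2)$ by rotating the quadruple into a real reciprocal quadruple of positive eigenvalues. Moreover $D(-2)\diamond D(-2)$ lies in the same path component of $\{M'\in\Sp(4):\sigma(M')\cap\U=\emptyset\}$ as $D(2)\diamond D(2)$: one can rotate the two real negative pairs through a complex quadruple onto the positive real axis without crossing $\U$. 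Hence any even number of $D(-2)$'s can be absorbed, leaving $M|_{V_{\mathrm{hyp}}}$ homotopic to $D(2)^{\diamond\ell}$ or $D(-2)\diamond D(2)^{\diamond(\ell-1)}$ as claimed. Concatenating the homotopies of all summands gives $h$.

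The main obstacle is the careful bookkeeping in the hyperbolic part: ensuring that the residual $D(-2)$ (or its absence) is genuinely forced by $M$ and not an artifact of the deformation. This amounts to identifying the surviving mod-$2$ invariant as $\sign\det(M|_{V_{\mathrm{hyp}}}-\one)$, which in turn detects the parity of negative real hyperbolic packets of $M$; once this is pinned down, the absorption argument for pairs of $D(-2)$'s closes the proof. A secondary technicality is verifying that the intermediate rotations of a complex quadruple never push eigenvalues onto $\U$, which is a routine but slightly delicate continuity argument on the roots of the characteristic polynomial.
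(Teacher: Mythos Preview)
The paper does not give its own proof of this theorem: it simply cites \cite[Theorem 1.8.10 \& Corollary 2.3.8]{Lon02}. Your proposal is a correct outline of precisely the argument one finds there---the symplectic spectral decomposition into $M$-invariant packets indexed by $\{\lambda,\bar\lambda,\lambda^{-1},\bar\lambda^{-1}\}$, the reduction of each $\U$-packet to a basic normal form while preserving $\sigma(M)\cap\U$ and the $\nu_\lambda$, and the unconstrained deformation of the hyperbolic part to a $\diamond$-product of $D(\pm2)$'s followed by the absorption of $D(-2)\diamond D(-2)$ into $D(2)\diamond D(2)$ via a complex quadruple. Your identification of the residual mod-$2$ invariant with $\sign\det(M|_{V_{\mathrm{hyp}}}-\one)$ is also the correct bookkeeping, and is exactly what pins down which of the two hyperbolic tails $M_0$ one lands on. In short, there is nothing to compare: you have sketched the cited proof, and the paper offers nothing beyond the citation.
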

\begin{proof}
The proof can be found in \cite[Theorem 1.8.10 \& Corollary 2.3.8]{Lon02}
\end{proof}
Since we are interested in the nondegenerate case, i.e. $M\in\Sp(2n)$ with  $\nu_1(M^k)=0$ for all $k\in\N$,  we can exclude the basic normal form $N_1(\lambda,b)$ since
$$
\nu_1(N_1(\lambda,b)^k)\geq 1,\quad\textrm {for some }\; k\in\N.
$$
Moreover,
$$
\nu_1(R(\theta)^k)=2-2\varphi\Big(\frac{k\theta}{2\pi}\Big),\quad
\nu_1(N_2(\theta,B)^k)=2-2\varphi\Big(\frac{k\theta}{2\pi}\Big),\quad k\in\N,
$$
where $\varphi(a)=0$ if $a\in\Z$ and $\varphi(a)=1$ if $a\notin\Z$. Thus $\theta/2\pi$ should be irrational due to the nondegeneracy condition. Therefore in the case at hand, the endpoint of the path $h:[0,1]\to\Omega^0(M)$ in Theorem \ref{thm:decomposition into basic normal forms} is simply
\beq\label{eq:endpoint in homotopy component}
h(1)= R(\theta_1)\diamond\cdots\diamond R(\theta_p)\diamond N_2(\theta_{p+1},B_1)\diamond\cdots\diamond N_2(\theta_{p+q},B_q)\diamond M_0
\eeq
for $\theta_i/2\pi\in(0,1)\setminus\Q$, $i\in\{1,\dots,p+q\}$. Now we are ready to state the following theorem due to \cite{Lon00} which will plays a crucial role.

\begin{Thm}\label{thm:index formula}
Let $\Psi\in\PP(2n,\tau)$ with $\Psi(\tau)^k\in\Sp(2n)^*$ for all $k\in\N$, i.e. $\Psi^k\in\PP(2n,k\tau)^*$, and $h:[0,1]\to\Omega^0(\Psi(\tau))$ such that $h(0)=\Psi(\tau)$ and $h(1)$ is as \eqref{eq:endpoint in homotopy component}. Then the Maslov index of $\Psi^k$ is
$$
\mu(\Psi^k)=\sum_{1\leq i\leq p}\Big(k(P_i-1)+2\Big[\frac{k\theta_i}{2\pi}\Big]+1\Big)+\sum_{1\leq j\leq q}k W_j+\sum_{1\leq o\leq\ell}k Q_o
$$
where $P_i$'s are odd integers and $W_j$'s, $Q_o$'s are integers and they satisfy
$$
\sum_{i} P_i+\sum_j W_j+\sum_o Q_o=\mu(\Psi).
$$
Here, $[a]\in\Z$ is the biggest integer number smaller than or equal to $a\in\R$.
\end{Thm}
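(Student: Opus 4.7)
The plan is to proceed in three stages: first homotope the path to one whose endpoint is a direct sum of basic normal forms, then use the multiplicativity/additivity properties of the Maslov-type index under $\diamond$, and finally compute the iterated index on each basic block separately.

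\textbf{Step 1 (reduction to normal form via homotopy).} The given $h:[0,1]\to\Omega^0(\Psi(\tau))$ has the crucial feature that, since membership in $\Omega^0$ preserves both the eigenvalues on $\U$ and their multiplicities, one has $h(s)^k\in\Sp(2n)^*$ for every $s\in[0,1]$ and every $k\in\N$. I would use $h$ to build a two-parameter family $\Psi_s\in\PP(2n,\tau)$ with $\Psi_0=\Psi$ and $\Psi_s(\tau)=h(s)$ (e.g.\ by concatenating $\Psi$ with a short path following $h$ and reparametrising). Applying the characterisation of $\mu$ via $\rho$ together with the fact that the capping path from $h(s)^k$ to one of the two fixed reference matrices can be chosen continuously in $s$ (since we stay in $\Sp(2n)^*$), the function $s\mapsto\mu(\Psi_s^k)$ is continuous and integer-valued, hence constant. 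This reduces the problem to the case $\Psi(\tau)=h(1)$ in \eqref{eq:endpoint in homotopy component}.

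\textbf{Step 2 (splitting under $\diamond$).} The next input is two formal properties I would record as lemmas: iteration commutes with the $\diamond$-product, $(\Phi_1\diamond\Phi_2)^k=\Phi_1^k\diamond\Phi_2^k$; and $\mu$ is additive, $\mu(\Phi_1\diamond\Phi_2)=\mu(\Phi_1)+\mu(\Phi_2)$. Additivity follows from $\rho(M_1\diamond M_2)=\rho(M_1)\rho(M_2)$ (a direct computation from the naturality and normalisation axioms), so the angle function $\alpha$ is additive, and the capping path can be chosen as a $\diamond$-product of cappings. Consequently, writing $\Psi$ as a $\diamond$-decomposition $\Psi_1\diamond\cdots\diamond\Psi_p\diamond\Psi_{p+1}\diamond\cdots\diamond\Psi_{p+q}\diamond\Psi_0$ whose endpoints match the blocks of $h(1)$, the problem reduces to computing $\mu(\Psi_i^k)$ on each block.

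\textbf{Step 3 (block-by-block computation).} On each block I use that the Maslov index of a given path is determined by its endpoint up to an integer winding contribution. For an $R(\theta_i)$ block with $\theta_i/2\pi\in(0,1)\setminus\Q$, I take the reference rotation path $t\mapsto R(t\theta_i/\tau)$, whose $k$-th iterate winds around and has index $2[k\theta_i/2\pi]+1$; the actual path $\Psi_i$ differs from this reference by some integer winding $m$, forcing $P_i:=\mu(\Psi_i)=2m+1$ odd and giving
\[
\mu(\Psi_i^k)=2km+2[k\theta_i/2\pi]+1=k(P_i-1)+2[k\theta_i/2\pi]+1.
\]
For an $N_2(\theta_j,B_j)$ block I would homotope within $\Omega^0$ to a block whose iterates have a controlled $\rho$-increment equal to $k$ times the original (the unipotent direction $B_j$ produces no extra winding in $\rho$ because $\rho$ only sees the elliptic part), yielding $\mu(\Psi_{p+j}^k)=kW_j$. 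For the hyperbolic part $M_0$, each $D(\pm 2)$ summand contributes an even ($+$) or odd ($-$) constant, and under iteration $D(\pm 2)^k$ stays in the same component of $\Sp(2)^*$ when $k$ is allowed by the sign, so the iterated contribution is simply $kQ_o$. Summing the three types of contributions and identifying the $k=1$ value with $\mu(\Psi)$ gives $\sum P_i+\sum W_j+\sum Q_o=\mu(\Psi)$.

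\textbf{Main obstacle.} The genuinely delicate step is the $R(\theta_i)$-computation, together with the homotopy invariance in Step 1: both rely on tracking the lift $\alpha_\Psi$ of $\rho\circ\Psi$ precisely across iterations, and on verifying that the choice of the short capping path to the reference nondegenerate form does not affect the count. Once these are handled, the $N_2$ and hyperbolic contributions are comparatively routine because their iterates stay within a single component of $\Omega^0$, so iteration acts linearly on $\mu$.
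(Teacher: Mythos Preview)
The paper does not actually prove this theorem; it simply refers the reader to \cite{Lon00} and \cite[Chapter~8]{Lon02}. Your three-step outline---homotope the endpoint within $\Omega^0$, split via $\diamond$, compute block by block---is indeed the skeleton of Long's argument, so in that sense you are aligned with the cited source.

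There is, however, a real gap in your Step~2. After Step~1 you have arranged that the \emph{endpoint} equals the $\diamond$-product $h(1)$, but the \emph{path} $\Psi$ itself has no reason to split as $\Psi_1\diamond\cdots\diamond\Psi_0$; you cannot simply ``write $\Psi$ as a $\diamond$-decomposition''. What is actually needed is a further invariance: if $\Phi,\Phi'\in\PP(2n,\tau)^*$ have the same endpoint and the same index $\mu$, then $\mu(\Phi^k)=\mu((\Phi')^k)$ for every $k$. This is not a tautology, because $\rho$ is not a group homomorphism, so the identity $\Delta(\Phi^k)=k\,\Delta(\Phi)$ on which it rests requires its own proof. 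Once you have that, you may replace $\Psi$ by any $\diamond$-product of block paths with the correct endpoints, adjusted so that the block indices sum to $\mu(\Psi)$, and then your Step~3 goes through. In Long's treatment this reduction is handled instead through the Bott-type iteration formula, which expresses $\mu(\Psi^k)$ in terms of $\omega$-indices and splitting numbers that are manifestly determined by $\mu(\Psi)$ and $\Omega^0(\Psi(\tau))$, bypassing any literal decomposition of the path. Your identification of the $R(\theta)$-block computation as the delicate point is accurate; that is exactly where the floor terms $2[k\theta/2\pi]$ originate and where the bulk of the work in \cite{Lon02} lies.
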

\begin{proof}
The proof can be found in \cite{Lon00} or \cite[Chapter 8]{Lon02}.
\end{proof}

\begin{example}\label{ex:iteration formula for 2-dim}
Let $\gamma$ be a simple closed Reeb orbit on a contact manifold of dimension 3 and suppose that all $\gamma^k$'s are nondegenerate. If $\gamma$ is elliptic, $\mu_{CZ}(\gamma)\in2\Z+1$ and
$$
\mu_{CZ}(\gamma^k)=k(\mu_{CZ}(\gamma)-1)+2[k\theta]+1,\;\;\theta\in(0,1)\setminus\Q.
$$
If $\gamma$ is hyperbolic,
$$
\mu_{CZ}(\gamma^k)=k\mu_{CZ}(\gamma).
$$
If $\gamma$ has a negative real Floquet multiplier, $\mu_{CZ}(\gamma)$ is odd. Otherwise, $\gamma$ has a positive real Floquet multiplier and $\mu_{CZ}(\gamma)$ is even, see \cite[Section 8.1]{Lon02}.
\end{example}

\begin{example}\label{ex:iteration formula for 4-dim}
Let $\gamma$ be a simple closed Reeb orbit on a contact manifold of dimension 5 and suppose that all $\gamma^k$'s are nondegenerate. If $\gamma$ is elliptic, either  $\mu_{CZ}(\gamma)\in2\Z$ and 
$$
\mu_{CZ}(\gamma^k)=k(\mu_{CZ}(\gamma)-2)+2[k\theta_1]+2[k\theta_2]+2,\;\;\theta_1,\,\theta_2\in(0,1)\setminus\Q
$$
or
$$
\mu_{CZ}(\gamma^k)=k\mu_{CZ}(\gamma).
$$
If $\gamma$ is hyperbolic,
$$
\mu_{CZ}(\gamma^k)=k\mu_{CZ}(\gamma).
$$
If $\gamma$ is neither elliptic nor hyperbolic, i.e. $e(\gamma)=2$, then
$$
\mu_{CZ}(\gamma^k)=k(\mu_{CZ}(\gamma)-1)+2[k\theta]+1,\quad\theta\in(0,1)\setminus\Q.
$$
\end{example}

One can see that the Conley Zehnder index cannot decrease (resp. increase) under iteration if $\Sigma$ is 3-dimensional and $\mu_{CZ}(\gamma)$ is positive (resp. negative). Unfortunately this does not remain true for higher dimensions. However if the Conley-Zehnder index of a simple closed Reeb orbit is big or small enough, we still have that property. We recall that our contact manifold $(\Sigma,\alpha)$ is of dimension $2n-1$.

\begin{Prop}\label{prop:index nondecreasing}
Let $\gamma$ be a closed Reeb orbit with $\mu_{CZ}(\gamma)\geq n-1$. If every $\gamma^k$, $\forall k\in\N$ is nondegenerate,  we have
$$
\mu_{CZ}(\gamma^k)\leq\mu_{CZ}(\gamma^{k+1}),\quad k\in\N.
$$
\end{Prop}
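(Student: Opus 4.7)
The plan is to apply the index iteration formula (Theorem \ref{thm:index formula}) directly, writing down the difference $\mu_{CZ}(\gamma^{k+1})-\mu_{CZ}(\gamma^k)$ and bounding it from below in terms of $\mu_{CZ}(\gamma)$ and the dimension.

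Recall that the linearized Poincar\'e return map of $\gamma$ acts on the contact hyperplane $\xi$, which has dimension $2n-2$. Applying the nondegenerate version of Theorem \ref{thm:decomposition into basic normal forms} (i.e.\ the form \eqref{eq:endpoint in homotopy component}) to $\Psi_\gamma(\tau)\in\Sp(2n-2)$, we get blocks $R(\theta_1),\dots,R(\theta_p)$, then $N_2(\theta_{p+1},B_1),\dots,N_2(\theta_{p+q},B_q)$, and finally $\ell$ copies of $D(\pm2)$, with
\[
2p+4q+2\ell=2n-2,\qquad\text{hence}\qquad p+2q+\ell=n-1.
\]
In particular $p\leq n-1$.

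Now apply Theorem \ref{thm:index formula} to both $\Psi_\gamma^k$ and $\Psi_\gamma^{k+1}$ and subtract. Writing $\Delta_i^k:=\bigl[\tfrac{(k+1)\theta_i}{2\pi}\bigr]-\bigl[\tfrac{k\theta_i}{2\pi}\bigr]\in\{0,1\}$ (since $\theta_i/2\pi\in(0,1)$), the iteration formula yields
\[
\mu_{CZ}(\gamma^{k+1})-\mu_{CZ}(\gamma^{k})
=\sum_{i=1}^{p}(P_i-1)+2\sum_{i=1}^{p}\Delta_i^k+\sum_{j=1}^{q}W_j+\sum_{o=1}^{\ell}Q_o.
\]
Using the constraint $\sum_iP_i+\sum_jW_j+\sum_oQ_o=\mu_{CZ}(\gamma)$ from Theorem \ref{thm:index formula}, the right-hand side simplifies to
\[
\mu_{CZ}(\gamma^{k+1})-\mu_{CZ}(\gamma^{k})=\mu_{CZ}(\gamma)-p+2\sum_{i=1}^{p}\Delta_i^k.
\]

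The sum $2\sum_i\Delta_i^k$ is manifestly nonnegative, and by the dimension count above $p\leq n-1\leq \mu_{CZ}(\gamma)$ by hypothesis. Hence $\mu_{CZ}(\gamma^{k+1})-\mu_{CZ}(\gamma^k)\geq\mu_{CZ}(\gamma)-p\geq 0$, which is the desired monotonicity. The only mildly delicate point is keeping track that the basic normal form $N_1(\lambda,b)$ is excluded (by the nondegeneracy of every iterate, as noted in the paragraph preceding \eqref{eq:endpoint in homotopy component}), so that the formula of Theorem \ref{thm:index formula} applies directly with the clean integer identities $P_i\in 2\Z+1$ and $W_j,Q_o\in\Z$; this is what forces $p$ to be bounded purely by the dimension rather than contributing extra unit shifts.
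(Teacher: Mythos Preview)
Your proof is correct and follows essentially the same route as the paper's: both apply Theorem~\ref{thm:index formula} to write $\mu_{CZ}(\gamma^{k+1})-\mu_{CZ}(\gamma^k)=\mu_{CZ}(\gamma)-p+2\sum_i\Delta_i^k$ (the paper packages this as $r+2\sum_i([( k{+}1)\theta_i]-[k\theta_i])$ with $r=\mu_{CZ}(\gamma)-j$ and $j=p$), then use the dimension bound $p\leq n-1\leq\mu_{CZ}(\gamma)$ and the nonnegativity of the floor increments. Your explicit dimension count $p+2q+\ell=n-1$ is a nice unpacking of the paper's bare assertion that $j\in\{0,\dots,n-1\}$.
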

\begin{proof}
According to Theorem \ref{thm:index formula}, the Conley-Zehnder index of the $k$-fold cover of  $\gamma$ is of the following form.
$$
\mu_{CZ}(\gamma^k)=kr+\sum_{i=1}^{j}2[k\theta_i]+j,\quad r+j=\mu_{CZ}(\gamma)\geq n-1.
$$
Since $j\in\{0,\dots,n-1\}$ and $\theta_i\in(0,1)\setminus\Q$, $r\geq0$ and thus the claim follows directly.
\end{proof}

\begin{Prop}\label{prop:index jump}
Let $\gamma$ be a closed Reeb orbit with $\mu_{CZ}(\gamma)=n+1$. If every $\gamma^k$, $\forall k\in\N$ is nondegenerate, we have
$$
\mu_{CZ}(\gamma^k)+2\leq\mu_{CZ}(\gamma^{k+1}),\quad k\in\N.
$$
Moreover there exists $k_0\in\N$ such that 
$$
\mu_{CZ}(\gamma^{k_0})+2<\mu_{CZ}(\gamma^{k_0+1}).
$$
\end{Prop}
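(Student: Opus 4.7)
The plan is to combine the explicit iteration formula of Theorem \ref{thm:index formula} with the dimension constraint coming from the decomposition of Theorem \ref{thm:decomposition into basic normal forms}, in the same spirit as the proof of Proposition \ref{prop:index nondecreasing}, but now exploiting the hypothesis $\mu_{CZ}(\gamma) = n+1$ to squeeze out an extra $+2$.

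First I would apply Theorems \ref{thm:decomposition into basic normal forms} and \ref{thm:index formula} to $\Psi_\gamma \in \PP(2n-2,\tau)^*$ and write
\[
\mu_{CZ}(\gamma^k) \;=\; kr \;+\; 2\sum_{i=1}^{p}\bigl[k\theta_i/(2\pi)\bigr] \;+\; p,
\]
where $p$ is the number of $R(\theta_i)$ blocks in the normal form and $r := \sum_i(P_i-1) + \sum_j W_j + \sum_o Q_o$, so that $r+p = \mu_{CZ}(\gamma) = n+1$. The key observation is then the dimension count: since each $R(\theta_i)$ block is $2{\times}2$, each $N_2$ block is $4{\times}4$ and each $D(\pm2)$ block is $2{\times}2$, the decomposition of the $(2n-2){\times}(2n-2)$ matrix forces $p + 2q + \ell = n-1$, hence $p \leq n-1$ and consequently $r \geq 2$.

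Next, for the inequality $\mu_{CZ}(\gamma^k)+2 \leq \mu_{CZ}(\gamma^{k+1})$, I would just compute the difference
\[
\mu_{CZ}(\gamma^{k+1}) - \mu_{CZ}(\gamma^k) \;=\; r \;+\; 2\sum_{i=1}^p\Bigl(\bigl[(k+1)\theta_i/(2\pi)\bigr] - \bigl[k\theta_i/(2\pi)\bigr]\Bigr),
\]
and note that each bracket difference is either $0$ or $1$, so the whole expression is bounded below by $r \geq 2$.

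Finally, for the existence of $k_0$ realizing the strict inequality, I would split into cases. If $p = 0$, then $r = n+1 \geq 3$ and every consecutive difference is $\geq 3 > 2$, so any $k_0$ works. If $p \geq 1$, the irrationality of $\theta_1/(2\pi) \in (0,1)\setminus\Q$ ensures that the sequence $\bigl[k\theta_1/(2\pi)\bigr]$ is unbounded, so there is some $k_0$ with $\bigl[(k_0+1)\theta_1/(2\pi)\bigr] - \bigl[k_0\theta_1/(2\pi)\bigr] = 1$, producing an extra $+2$ in the difference and hence $\mu_{CZ}(\gamma^{k_0+1}) - \mu_{CZ}(\gamma^{k_0}) \geq r + 2 \geq 4 > 2$. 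There is no serious obstacle here — once the index inequality $p \leq n-1$ is extracted from the block decomposition, the rest is bookkeeping on integer parts of irrational multiples.
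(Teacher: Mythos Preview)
Your proposal is correct and follows essentially the same approach as the paper: write $\mu_{CZ}(\gamma^k)=kr+\sum_{i=1}^{p}2[k\theta_i]+p$ with $r+p=n+1$, use the block-dimension count $p\leq n-1$ to get $r\geq 2$, and then obtain the strict jump from the irrationality of some $\theta_i$. The only cosmetic difference is that you treat the case $p=0$ explicitly (where $r=n+1\geq 3$ for $n\geq 2$), whereas the paper writes $1\leq j\leq n-1$ and folds that case into the ``$r\geq 3$'' branch.
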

\begin{proof}
The first inequality follows from that $r\geq2$ in the following form again. 
$$
\mu_{CZ}(\gamma^k)=kr+\sum_{i=1}^{j}2[k\theta_i]+j,\quad r+j=n+1
$$
where $1\leq j\leq n-1$. If $r\geq3$, $\mu_{CZ}(\gamma^{k+1})\geq\mu_{CZ}(\gamma^k)+3$ for all $k\in\N$. If $r=2$, we pick $k_0\in\N$ satisfying $[(k_0+1)\theta_i]-[k_0\theta_i]=1$ for some $i\in\{1,\dots,j\}$ so that $\mu_{CZ}(\gamma^{k_0+1})\geq\mu_{CZ}(\gamma^{k_0})+4$.
\end{proof}

\begin{Prop}\label{prop:index decreasing}
If a simple closed Reeb orbit $\gamma$ has $\mu_{CZ}(\gamma)\leq -n$,
$$
\mu_{CZ}(\gamma^k)>\mu_{CZ}(\gamma^{k+1}),\quad \forall k\in\N.
$$
\end{Prop}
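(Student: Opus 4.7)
The plan is to mimic the bookkeeping of Propositions~\ref{prop:index nondecreasing} and~\ref{prop:index jump} and exploit the fact that a hypothesis $\mu_{CZ}(\gamma)\le -n$ forces the linear part of the iteration formula to dominate the bounded floor-function corrections by a strictly negative margin.

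First I would invoke Theorem~\ref{thm:index formula} applied to the linearised return map of $\gamma$, which acts on the $(2n-2)$-dimensional symplectic vector space $\xi_{\gamma(0)}$. Since every $\gamma^k$ is assumed nondegenerate, the $N_1$ basic normal forms are excluded and the homotopy endpoint in $\Omega^0(\Psi_\gamma(\tau))$ has the form \eqref{eq:endpoint in homotopy component}. This puts the Conley--Zehnder index of the $k$-fold iterate into the form used in the preceding propositions, namely
\begin{equation*}
\mu_{CZ}(\gamma^{k})=kr+\sum_{i=1}^{j}2[k\theta_{i}]+j,\qquad r+j=\mu_{CZ}(\gamma),
\end{equation*}
with $\theta_{i}\in(0,1)\setminus\Q$ and $j\in\{0,1,\dots,n-1\}$, the bound on $j$ coming from the rank constraint $2p+4q+2\ell=2n-2$ on the elliptic factors.

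Next I would compute the discrete increment
\begin{equation*}
\mu_{CZ}(\gamma^{k+1})-\mu_{CZ}(\gamma^{k})=r+2\sum_{i=1}^{j}\bigl([(k+1)\theta_{i}]-[k\theta_{i}]\bigr).
\end{equation*}
Because each $\theta_{i}$ lies in $(0,1)$, every difference $[(k+1)\theta_{i}]-[k\theta_{i}]$ belongs to $\{0,1\}$, so the increment is bounded above by $r+2j=\mu_{CZ}(\gamma)+j$. The assumption $\mu_{CZ}(\gamma)\le -n$ together with $j\le n-1$ then gives
\begin{equation*}
\mu_{CZ}(\gamma^{k+1})-\mu_{CZ}(\gamma^{k})\le \mu_{CZ}(\gamma)+j\le -n+(n-1)=-1,
\end{equation*}
which is the desired strict inequality.

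There is essentially no analytic difficulty; the only subtlety is confirming that $j$ really is at most $n-1$ under our dimensional hypothesis and that the nondegeneracy of all iterates $\gamma^{k}$ indeed rules out the $N_{1}(\lambda,b)$ normal forms (so that the exponent $r$ and the irrational rotation angles $\theta_{i}$ are well defined). Once those are in place the argument reduces to the elementary estimate above, mirroring the proof of Proposition~\ref{prop:index nondecreasing} but with the inequality on $\mu_{CZ}(\gamma)$ reversed and sharpened so as to absorb the maximal possible jump $2j$ from the floor contributions.
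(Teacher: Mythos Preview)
Your argument is correct and matches the paper's proof essentially line for line: the paper writes the iteration formula $\mu_{CZ}(\gamma^k)=rk+\sum_{i=1}^j 2[k\theta_i]+j$ with $r+j=\mu_{CZ}(\gamma)\le -n$, observes that $r\le -n-j<-2j$ (using $j\le n-1$), and declares the claim proved. Your version makes explicit the one step the paper leaves tacit, namely that the increment $\mu_{CZ}(\gamma^{k+1})-\mu_{CZ}(\gamma^{k})$ is bounded above by $r+2j$, but the content is identical.
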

\begin{proof}
We note that the Conley-Zehnder index of $\gamma^k$ is
$$
\mu_{CZ}(\gamma^k)=rk+\sum_{i=1}^j 2[k\theta_i]+j, \quad j\in\{0,1,\dots,n-1\}
$$
and $r+j=\mu_{CZ}(\gamma)\leq-n$. Thus $r\leq-n-j<-2j$ and the claim is proved.
\end{proof}

%\begin{Prop}
%Suppose that $\{\theta_1,\dots\theta_k\}$ irrational numbers in $(0,1)$ satisfy 
%$$
%\sum_{i=1}^k\theta_i=C_1+\frac{1}{C_2},\quad \textrm{for some}\; C_1,\;C_2\in\N.
%$$
%Then there exists $k_0\in\N$ such that for all $i\in\{1,\dots,k\}$,
%$$
%[(k_0+1)\theta_i]=[k_0\theta_i]+1.
%$$
%\end{Prop}

\section{Proofs of the main results}

We recall that the non-vanishing of $SH^{S1,+}_
j(W_0)$ implies the existence of a (possibly
non-simple) closed Reeb orbit on $(\Sigma,\alpha)$ of Conley-Zehnder index $j\in\N$.
\subsection{Displaceable case}\quad\\[-1.5ex]

This subsection is concerned with a contact manifold $(\Sigma,\xi)$ which admits an exact contact embedding into a symplectic manifold $(W,\om)$ which is convex at infinity and $c_1(W)|_{\pi_2(W)}=0$. We continue to assume that a contact form $\alpha$ on $(\Sigma,\xi)$ is nondegenerate and $\alpha=\lambda|_\Sigma$ for some primitive 1-form $\lambda$ of $\om$. Recall that $W_0\subset W$ is the relatively compact domain bounded by $\Sigma$

\begin{Thm}
Suppose that $\Sigma$ is displaceable in $(W,\om)$. Then the $S^1$-equivariant symplectic homology of $(W_0,\om)$ vanishes.
\end{Thm}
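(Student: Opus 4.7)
The plan is to adapt the non-equivariant vanishing argument under displaceability (originally due to Frauenfelder--Schlenk and Cieliebak--Frauenfelder \cite{CF09}) to the Borel model used to define $SH^{S^1}$. I would fix $N$ first, show that $HF^{S^1,N}_*(K_\tau)$ vanishes for $\tau$ sufficiently large, and then pass to the direct limits $\tau\to\infty$ and $N\to\infty$.

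\medskip

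Let $F\in C^\infty_c(S^1\x W)$ satisfy $\phi_F(\Sigma)\cap\Sigma=\emptyset$, and denote by $e:=\|F\|_H$ its Hofer norm. I would lift $F$ to an $S^1$-invariant, compactly supported Hamiltonian $\wt F(t,x,z):=F(t,x)$ on $S^1\x\wh W\x S^{2N+1}$; its flow still displaces $\Sigma\x S^{2N+1}$ with Hofer norm $e$. For each $\tau>e$ with $\tau\notin\Spec(\Sigma,\alpha)$, I would construct an $S^1$-equivariant continuation homotopy on the chain complex $SC^{S^1,N}_*(K_\tau)$ passing through $K_\tau\#\wt F$ (Floer concatenation). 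The key technical input is a Hofer-type energy inequality for the parametrized Floer equation \eqref{eq:gradient flow eq2}: since $K_\tau=H_\tau-f$ with $f$ bounded on $S^{2N+1}$, and since cascade transitions driven by the Morse--Bott function $q$ occur at constant action levels on critical manifolds, the Borel construction introduces only bounded additive corrections to the standard non-equivariant estimate.

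\medskip

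Once this estimate is in place, the classical Frauenfelder--Schlenk argument goes through: one produces a chain-level null-homotopy of the identity on $SC^{S^1,N}_*(K_\tau)$ for $\tau$ larger than $e$ plus a bounded correction, forcing $HF^{S^1,N}_*(K_\tau)=0$. The direct limit $\tau\to\infty$ gives $SH^{S^1,N}_*(W_0)=0$ for every $N$, and the further direct limit $N\to\infty$ yields $SH^{S^1}_*(W_0)=0$.

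\medskip

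The main technical obstacle is carrying out the Hofer-type energy estimate in the Morse--Bott-with-cascades framework. The estimate is classical for smooth Floer cylinders; extending it to cascades requires bounding the action jumps at each cascade transition by the $L^\infty$-norm of $q$, and verifying that the sum of such jumps over a bounded number of cascades remains uniformly controlled. This is a standard but non-trivial verification; once established, the rest of the argument is a formal adaptation of the non-equivariant case.
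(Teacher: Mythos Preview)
Your approach is genuinely different from the paper's. The paper does not carry out any direct displacement argument at the level of $SH^{S^1}$; instead it concatenates existing results: displaceability $\Rightarrow RFH=0$ \cite{CF09,AF10}, then $RFH=0 \Rightarrow SH=0$ via the long exact sequence of \cite{CFO10} together with Ritter's unit argument \cite{Rit10}, and finally $SH=0 \Rightarrow SH^{S^1}=0$ via the spectral sequence with $E^2_{i,j}\cong SH_i(W_0)\otimes H_j(\C P^\infty;\Q)$. The paper does cite \cite{Kan13} for a direct argument in the Liouville-domain case, so your general strategy is in a legitimate direction and would buy a more self-contained proof.

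There is, however, a genuine gap in your construction, and it is not the one you identify. The lift $\wt F(t,x,z):=F(t,x)$ is invariant under the $S^1$-action on the $S^{2N+1}$-factor alone, but \emph{not} under the diagonal $S^1$-action $\theta\cdot(v(t),z)=(v(t-\theta),e^{2\pi i\theta}z)$ unless $F$ is autonomous. In the Borel model it is precisely the diagonal action that one quotients by; the $T^2$-invariance in the paper's setup is an artifact of the autonomous choice $K_\tau=H_\tau-f$. A time-dependent displacing Hamiltonian destroys this extra symmetry, so $K_\tau\#\wt F$ is not an admissible datum for the equivariant complex $SC^{S^1,N}$, and the ``$S^1$-equivariant continuation homotopy'' you invoke does not exist as written. (There is no global $S^1$-equivariant map $S^{2N+1}\to S^1$, so one cannot repair this by a simple phase shift of the time variable.) This, rather than the cascade energy estimate, is the real obstacle. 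Two clean fixes: either run your Frauenfelder--Schlenk argument non-equivariantly to obtain $SH(W_0)=0$ and then feed this into the spectral sequence the paper uses, or work in the genuinely parametrized framework of \cite{BO12b} where the Hamiltonians depend on $z$ and build the displacing family with the correct diagonal invariance from the start.
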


The above vanishing theorem can be proved by applying big theorems. Due to \cite{CF09,AF10}, displaceability of $\Sigma$ in $W$ implies vanishing of the Rabinowitz Floer homology of $(W,\Sigma)$. Then using a long exact sequence involving Rabinowitz Floer homology and symplectic (co)homology in \cite{CFO10} and a unit in symplectic cohomology, \cite {Rit10} proved that vanishing of Rabinowitz Floer homology implies vanishing of symplectic homology. Since there exists a spectral sequence converging to $SH^{S^1}(W_0)$ with second page given by
$$
E^2_{i,j}\cong SH_i(W_0)\otimes H_j(\C P^\infty;\Q),
$$
see \cite{Vit99,BO12b}, $SH^{S^1}(W_0)$ vanishes as well provided that $\Sigma$ is displaceable in $W$. We remark that the last argument can be replaced by a different spectral sequence \cite[Section 8b]{Sei08}. However recently a direct relation between leafwise intersections and vanishing of $SH(W_0)$ and $SH^{S^1}(W_0)$ was studied  in \cite{Kan13} (also in \cite{CO08}) in the case that $(W,\om)$ is the completion of Liouville domain, i.e. $(W,\om=\widehat W,\widehat\om)$. Therefore we have a direct proof of the theorem in that case and leave the following question.
\begin{Qu}
{\em If $\Sigma$ is displaceable in $(W,\om)$, is it displaceable in $(\widehat W,\widehat\om)$ as well? }
\end{Qu}

Combining the above theorem with the Viterbo long exact sequence we obtain the following computation which agrees with the contact homology computation \cite{Yau04} in the subcritical Weinstein case. 

%The following computation also can be proved using a spectral sequence of \cite{Vit99,BO12b} with %second page $E_{i,j}^{2,+}\cong SH_i^+(W_0)\otimes H_j(\C P^\infty;\Q)$ converging $SH^{S^1,+}(W)$.

\begin{Prop}\label{prop:symplectic homology computation}
If $\Sigma$ is displaceable in $(W,\om)$, we have
$$
SH_*^{S^1,+}(W_0)\cong\!\!\!\ \bigoplus_{i+j=*+n-1}H_i(W_0,\Sigma;\Q)\otimes H_j(\C P^\infty;\Q).
$$
\end{Prop}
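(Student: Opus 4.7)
The plan is to combine the preceding vanishing theorem with the tautological long exact sequence induced by the action filtration $SC^{S^1,-,N}\hookrightarrow SC^{S^1,N}$ with quotient $SC^{S^1,+,N}$, namely
\begin{equation*}
\cdots \to SH^{S^1,-}_*(W_0) \to SH^{S^1}_*(W_0) \to SH^{S^1,+}_*(W_0) \xrightarrow{\partial} SH^{S^1,-}_{*-1}(W_0) \to \cdots .
\end{equation*}
Since $\Sigma$ is displaceable in $(W,\om)$, the previous theorem gives $SH^{S^1}_*(W_0)=0$, so the connecting map becomes an isomorphism $SH^{S^1,+}_*(W_0)\cong SH^{S^1,-}_{*-1}(W_0)$, and the whole task reduces to identifying the negative part.

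For that, I would describe $SC^{S^1,-,N}(K_\tau)$ directly. By the $C^2$-smallness of $H_\tau|_{W_0}$ together with the choice $\epsilon<\min\Spec(\Sigma,\alpha)$, the subcomplex $SC^{S^1,-,N}$ is generated precisely by the type-1 critical points $(x,z)$ with $x\in\Crit(H_\tau|_{W_0})$; all type-2 (Reeb-type) critical points lie above the action level $\epsilon$. Since the diagonal $S^1$-action is trivial on constant loops, quotienting by $S^1$ identifies the set of generators with $\Crit(H_\tau|_{W_0})\times\Crit(f/S^1)$, where $f/S^1$ is a Morse function on $S^{2N+1}/S^1=\C P^N$. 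Moreover the Floer equation \eqref{eq:gradient flow eq2} decouples on constant loops into the product of two independent Morse gradient flows, so the chain complex is the tensor product of the two Morse complexes. Using the index identity $\mu(x,z)=\ind_{H_\tau|_W}(x)-n+\ind_f(z)$ recorded in Section 2.1, together with the standard identification of the Morse complex of $H_\tau|_{W_0}$ (whose gradient points outward along $\Sigma$ because the Liouville field does) with $H_*(W_0,\Sigma;\Q)$ in the appropriate grading, the Morse-theoretic Künneth formula yields
\begin{equation*}
SH^{S^1,-,N}_*(W_0)\cong\bigoplus_{i+j=*+n}H_i(W_0,\Sigma;\Q)\otimes H_j(\C P^N;\Q).
\end{equation*}
Taking the direct limit $N\to\infty$ replaces $H_*(\C P^N;\Q)$ by $H_*(\C P^\infty;\Q)$, and substituting into the isomorphism from the first paragraph produces exactly the claimed formula.

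The main technical point, and the only step that is not essentially formal, is the sign bookkeeping for the Morse complex of $H_\tau|_{W_0}$: one must verify that with the conventions of Section 2.1, the grading $\mu_{CZ}(x)=\ind_{H_\tau}(x)-n$ combined with the outward-pointing boundary behavior of $\nabla H_\tau$ produces a complex computing the \emph{relative} homology $H_*(W_0,\Sigma;\Q)$ with shift $-n$, rather than the absolute one. This can be checked on a simple model (for instance the ball in $\C^n$, where the claimed formula predicts $SH^{S^1,+}_*$ concentrated in degrees $n+1,n+3,\ldots$, matching the known computation for $\C^n$). Once this identification is secured, the remainder of the argument is just the Künneth formula together with the displaceability-based vanishing.
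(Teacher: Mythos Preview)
Your argument is correct and is essentially the same as the paper's: the paper invokes the $S^1$-equivariant Viterbo long exact sequence, which is precisely your tautological sequence with the identification $SH^{S^1,-}_*(W_0)\cong H^{S^1}_{*+n}(W_0,\Sigma;\Q)$ already substituted in, and then uses the triviality of the $S^1$-action on $(W_0,\Sigma)$ to obtain the K\"unneth decomposition you derive by hand. The only difference is packaging: the paper quotes the identification of the negative part as known, whereas you unpack it via the product Morse complex on $\Crit(H_\tau|_{W_0})\times\Crit(f/S^1)$; both lead to the same formula with the same grading shift.
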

\begin{proof}
The $S^1$-equivariant version of the Viterbo long exact sequence is
$$
\cdots\to H^{S^1}_{*+n}(W_0,\Sigma;\Q)\to SH^{S^1}_*(W_0)\to SH^{S^1,+}_*(W_0)\to H^{S^1}_{*+n-1}(W_0,\Sigma;\Q)\to\cdots.
$$
According to the above theorem, $SH^{S^1,+}_*(W_0)\cong H^{S^1}_{*+n-1}(W_0,\Sigma;\Q)$. Since the $S^1$-action on $(W_0,\Sigma)$ is trivial, we have
$$
H^{S^1}_{*+n-1}(W_0,\Sigma;Q)\cong\!\!\!\bigoplus_{i+j=*+n-1}H_i(W_0,\Sigma;\Q)\otimes H_j(\C P^\infty;\Q).
$$
\end{proof}

 We note that the non-vanishing of $SH^{S^1,+}_*(W_0)$ implies the existence of a closed  Reeb orbit  of Conley-Zehnder index $*\in\N$ on $(\Sigma,\alpha)$, see Remark \ref{rmk:contribution}. Hence, the following corollary directly follows from Proposition \ref{prop:symplectic homology computation}. We would like to mention that this result is not new and has been proved in various ways.

\begin{Cor}\label{cor:existence of closed Reeb orbit}
If $(\Sigma,\alpha)$ is displaceable in $(W,\om)$, there exists a closed Reeb orbit $\gamma$ contractible in $W$ such that $\mu_{CZ}(\gamma)=n+1$ where $2n=\dim W$.
\end{Cor}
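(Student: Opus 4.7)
The plan is to show that $SH^{S^1,+}_{n+1}(W_0) \neq 0$ and then invoke Remark \ref{rmk:contribution} (together with the standing nondegeneracy assumption on $\alpha$) to conclude the existence of a closed Reeb orbit, contractible in $W$, whose Conley-Zehnder index equals $n+1$.

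First I would specialize the formula of Proposition \ref{prop:symplectic homology computation} to degree $* = n+1$. Since $H_j(\C P^\infty;\Q)$ equals $\Q$ in each nonnegative even degree $j$ and vanishes otherwise, the right-hand side becomes
\beq\nonumber
SH^{S^1,+}_{n+1}(W_0) \;\cong\; \bigoplus_{\substack{i+j=2n \\ j\in 2\N_0}} H_i(W_0,\Sigma;\Q).
\eeq
Among these summands the term $i=2n$, $j=0$ contributes $H_{2n}(W_0,\Sigma;\Q)$, which I claim is nonzero.

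Next I would identify this top-degree group by Poincar\'e--Lefschetz duality for the compact $2n$-manifold with boundary $W_0$: one has $H_{2n}(W_0,\Sigma;\Q)\cong H^0(W_0;\Q)\cong\Q$ since $W_0$ is connected. Thus $SH^{S^1,+}_{n+1}(W_0)$ contains a nonzero $\Q$-summand and hence does not vanish.

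Finally I would appeal to Remark \ref{rmk:contribution}: any nonvanishing class in $SH^{S^1,+}_{n+1}(W_0)$ is represented by a generator coming from a critical point $(v,z)\in\Crit q$ with $\mu_{CZ}(v) = n+1$, and such a $v$ corresponds, after reparametrization, to a closed Reeb orbit $\gamma$ on $(\Sigma,\alpha)$ contractible in $W$ with $\mu_{CZ}(\gamma) = n+1$. No step here is really an obstacle; the entire content is packaged into Proposition \ref{prop:symplectic homology computation} and the standard Poincar\'e--Lefschetz duality computation of the top relative homology.
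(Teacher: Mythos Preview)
Your argument is correct and is essentially the same as the paper's: the paper simply remarks that the corollary follows directly from Proposition~\ref{prop:symplectic homology computation} together with the observation (Remark~\ref{rmk:contribution}) that nonvanishing of $SH^{S^1,+}_*(W_0)$ forces a contractible closed Reeb orbit of Conley--Zehnder index~$*$. You have merely made explicit the reason why $SH^{S^1,+}_{n+1}(W_0)\neq 0$, namely Poincar\'e--Lefschetz duality $H_{2n}(W_0,\Sigma;\Q)\cong H^0(W_0;\Q)\cong\Q$ on the oriented compact manifold-with-boundary $W_0$; the paper uses this same fact implicitly (cf.\ the proof of Theorem~A, Case~(i)).
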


A direct consequence of Proposition \ref{prop:resonance identity}  and Proposition \ref{prop:symplectic homology computation} is:

\begin{Cor}\label{cor:resonance identity in the displaceable case}
Suppose that $\Sigma$ is displaceable in $(W,\om)$. If $(\Sigma,\alpha)$ has only finitely many closed Reeb orbits and their mean Conley-Zehnder indices are positive,
$$
\frac{1}{2}\sum_{i=1}^{2n}(-1)^{i+n-1} b_{i}(W_0,\Sigma;\Q)=\chi_m(W_0)=\sum_{\gamma_g\in\mathfrak{G}_s}\frac{(-1)^{\mu_{CZ}(\gamma_g)}}{\Delta(\gamma_g)}+\sum_{\gamma_b\in\mathfrak{B}_s}\frac{(-1)^{\mu_{CZ}(\gamma_b)}}{2\Delta(\gamma_b)}.
$$
\end{Cor}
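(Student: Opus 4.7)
The plan is to combine the two propositions that precede the statement: Proposition \ref{prop:resonance identity} already gives the second equality (the right-hand side in terms of simple good and bad orbits), so the only real task is to check that its homological boundedness hypothesis is met and to identify $\chi_m(W_0)$ with the topological expression in the middle of the displayed equation.

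First I will insert the input of Proposition \ref{prop:symplectic homology computation}. Using that $H_j(\C P^\infty;\Q) = \Q$ for $j \geq 0$ even and $0$ otherwise, the formula there becomes
$$
\dim SH_\ell^{S^1,+}(W_0) \;=\; \sum_{k\geq 0} b_{\ell + n - 1 - 2k}(W_0,\Sigma;\Q).
$$
Since the total Betti number $\sum_i b_i(W_0,\Sigma;\Q)$ is finite, this shows $\dim SH_\ell^{S^1,+}(W_0)$ is uniformly bounded in $\ell$. Thus $(W_0,\om)$ is homologically bounded, so the mean Euler characteristic is well-defined and the hypotheses of Proposition \ref{prop:resonance identity} are in force.

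Second, I will compute $\chi_m(W_0)$ by a rearrangement. Substituting $i = \ell + n - 1 - 2k$ and noting that $(-1)^\ell = (-1)^{i + n - 1 + 2k} = (-1)^{i+n-1}$, I rewrite
$$
\sum_{\ell=-N}^{N} (-1)^\ell \dim SH_\ell^{S^1,+}(W_0) \;=\; \sum_{i=0}^{2n} (-1)^{i+n-1} b_i(W_0,\Sigma;\Q) \cdot N_i(N),
$$
where $N_i(N) = \#\{ k \geq 0 : -N \leq i - n + 1 + 2k \leq N\}$. A direct count gives $N_i(N) = N/2 + O(1)$ as $N \to \infty$, the error being independent of $i$ within a finite range. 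Dividing by $N$ and taking the limit yields
$$
\chi_m(W_0) \;=\; \frac{1}{2}\sum_{i=0}^{2n} (-1)^{i+n-1} b_i(W_0,\Sigma;\Q).
$$
Finally, $\Sigma$ and $W_0$ are connected, so $H_0(W_0,\Sigma;\Q) = 0$ and the sum may start at $i=1$, matching the statement. The second equality is then precisely the content of Proposition \ref{prop:resonance identity}.

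There is no serious obstacle here; the argument is essentially a bookkeeping step gluing the two previous propositions together. The only mildly delicate point is making sure the alternating-sum rearrangement is legitimate, but since for each fixed $i$ only finitely many pairs $(\ell,k)$ contribute to $\sum_{\ell=-N}^{N}$ and $b_i$ is nonzero for at most $2n+1$ values of $i$, swapping the order of summation is unproblematic.
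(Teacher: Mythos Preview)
Your proof is correct and follows exactly the approach the paper indicates: the paper states the corollary as ``a direct consequence of Proposition~\ref{prop:resonance identity} and Proposition~\ref{prop:symplectic homology computation}'' with no further argument, and you have simply (and correctly) supplied the bookkeeping that glues those two propositions together.
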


\noindent\textbf{Proof of Theorem A}.\quad\\[-1.5ex]

\noindent\textbf{Case (i).} Suppose that $H_{2\ell-1}(W_0,\Sigma;\Q)\neq0$ for some $\ell\in\N$ and that  $\gamma$ in Corollary \ref{cor:existence of closed Reeb orbit} is the only closed Reeb orbit on $(\Sigma,\alpha)$. Let $\gamma_0$ be the simple closed Reeb orbit such that $\gamma=\gamma_0^k$ for some $k\in\N$. Applying Proposition \ref{prop:symplectic homology computation}, we have
$$
\left\{\begin{array}{ll}
SH^{S^1,+}_{n+1}(W_0)\cong\bigoplus_{i=1}^nH_{2i}(W_0,\Sigma;\Q),\\[1ex]
SH^{S^1,+}_{2\ell-n}(W_0)\cong \bigoplus_{i=1}^\ell H_{2i-1}(W_0,\Sigma;\Q).
\end{array}\right.
$$
Both groups are non-trivial, and thus multiples of $\gamma_0$ represents nonzero homology classes in $SH^{S^1,+}_{n+1}(W_0)$ and $SH^{S^1,+}_{2\ell-n}(W_0)$. However the parity of $n+1$ and the parity of $2\ell-n$ are different. Assume that the parity of $\mu_{CZ}(\gamma_0)$ is different from the parity of $n+1$. The other case follows in the same manner. Then all multiple covers of $\gamma_0$ with Conley-Zehnder index $n+1$ are bad orbits and thus do not contribute to $SH^{S^1,+}_{n+1}(W_0)$. This contradiction implies the existence of a second orbit  geometrically different from $\gamma_0$.\\[-1.5ex]

\noindent\textbf{Case (ii).} Suppose that $H_{2\ell}(W_0,\Sigma;\Q)=0$ for all $0\leq\ell\leq n-2$ and that $H_{2m-1}(W_0,\Sigma;\Q)=0$ for all $m\in\N$. Assume by contradiction that there exists precisely one simple closed Reeb orbit $\gamma_0$ as above. According to Proposition \ref{prop:symplectic homology computation}, we have
\beq\label{eq:homology computation for thm A}
\left\{\begin{array}{ll}
SH^{S^1,+}_{n-1}(W_0)\cong H_{2n-2}(W_0,\Sigma;\Q),\\[1ex]
SH^{S^1,+}_{n-1+2j}(W_0)\cong H_{2n}(W_0,\Sigma;\Q)\oplus H_{2n-2}(W_0,\Sigma;\Q),\quad j\in\N,\\[1ex]
SH^{S^1,+}_{*}(W_0)\cong \{0\},\quad *\in\Z\setminus\{n-3+2j\,|\,j\in\N\}.
\end{array}\right.
\eeq
\noindent\underline{Subcase 1}. If $\mu_{CZ}(\gamma_0)\geq n+1$, $\mu_{CZ}$ nondecreases under iteration, see Proposition \ref{prop:index nondecreasing}, and thus $\mu_{CZ}(\gamma_0)= n+1$. But even in this case, $\mu_{CZ}(\gamma_0^k)+2\leq\mu_{CZ}(\gamma_0^{k+1})$ for all $k\in\N$ and there exists $k_0\in\N$, $\mu_{CZ}(\gamma_0^{k_0})+2<\mu_{CZ}(\gamma_0^{k_0+1})$  due to Proposition \ref{prop:index jump}. This implies that there exist $j\in\N$ such that multiple covers of $\gamma_0$ cannot generate $SH^{S^1,+}_{n-1+2j}(W_0)$. Thus $\gamma_0$ cannot be the only closed Reeb orbit.\\[-1.5ex]

\noindent\underline{Subcase 2}. Suppose that $\mu_{CZ}(\gamma_0)<n-1$ or $\mu_{CZ}(\gamma_0)=n$. Since $SH_{\mu_{CZ}(\gamma_0)}^{S^1,+}(W_0)=0$ according to \eqref{eq:homology computation for thm A}, there exists another closed Reeb orbit $v$ such that either $\gamma_0$ is a boundary of $v$ or $v$ is a boundary of $\gamma_0$. More precisely, either $d_1[v]=[\gamma_0]$ or $d_1[\gamma_0]=[v]$ where $0\neq [v]\in E^1_{\mu_{CZ}(v),0}$, $0\neq [\gamma_0]\in E^1_{\mu_{CZ}(\gamma_0),0}$ and $d_1:E^1\to E^2$ is the boundary operator. This implies that $\mu_{CZ}(v)=\mu_{CZ}(\gamma_0)+1$ or $\mu_{CZ}(v)=\mu_{CZ}(\gamma_0)-1$, see Remark \ref{rmk:contribution}. Since $v$ has to be a good closed Reeb orbit due to the observation in  Subsection 2.2, $v$ is not a multiple cover of $\gamma_0$. \\[-1.5ex]

\noindent\underline{Subcase 3}. We assume that $\mu_{CZ}(\gamma_0)=n-1$. Due to the index iteration formula,
$$
\mu_{CZ}(\gamma_0^k)=kr+\sum_{i=1}^{j}2[k\theta_i]+j,\quad r+j=n-1
$$
for some $\theta_i\in(0,1)\setminus\Q$. Since $0\leq j\leq n-1$, $r\geq0$. If $r=0$,
$$
\mu_{CZ}(\gamma_0^k)=\sum_{i=1}^{n-1}2[k\theta_i]+n-1,\quad \gamma_0\in\mathfrak{G}_s
$$
and $\Delta(\gamma_0)\neq0$. Thus we have the following identity due to Corollary \ref{cor:resonance identity in the displaceable case}.
\beq\label{eq:mean index A}
\sum_{i=1}^{n-1}2\theta_i=\Delta(\gamma_0)=\frac{2}{1+b_{2n-2}(W_0,\Sigma;\Q)}.
\eeq
Since there is no closed Reeb orbit with Conley-Zehnder index $n-2$ or $n$, every multiple cover of $\gamma_0$ with Conley-Zehnder index $n-1$ is a cycle. Therefore according to \eqref{eq:homology computation for thm A},
$$
SH^{S^1,+}_{n-1}(W_0)=\Q\langle\gamma_0,\dots,\gamma_0^{b_{2n-2}(W_0,\Sigma;\Q)}\rangle.
$$
since $\mu_{CZ}(\gamma_0^k)\leq\mu_{CZ}(\gamma_0^{k+1})$ for all $k\in\N$ as observed in Proposition \ref{prop:index nondecreasing}. Moreover we have
$$
\mu_{CZ}(\gamma_0^{b_{2n-2}(W_0,\Sigma;\Q)+1})=n+1.
$$ 
This implies that there exists $i\in\{1,\dots,n-1\}$ such that $[(1+b_{2n-2}(W_0,\Sigma;\Q))\theta_i]=1$. But this contradicts \eqref{eq:mean index A} and the fact that $\theta_i\in(0,1)\setminus\Q$ for all $1\leq i\leq n-1$. 

If $r\geq1$, then there exists $k_0\in\N$ such that 
$$
\mu_{CZ}(\gamma_0^{k_0+1})\geq\mu_{CZ}(\gamma_0^{k_0})+3
$$
which contradicts \eqref{eq:homology computation for thm A}. Hence there exists a closed Reeb orbit geometrically distinct from $\gamma_0$ and this completes the proof.\hfill$\square$\\[-1.5ex]

\noindent\textbf{Proof of Corollary A}. \quad\\[-1.5ex]

According to Proposition \ref{prop:symplectic homology computation}, 
\beq\label{eq:computation cor A}
\left\{\begin{array}{ll}
SH^{S^1,+}_1(W_0)\cong H_2(W_0,\Sigma;\Q),\\[1ex]
SH^{S^1,+}_{2k}(W_0)\cong H_3(W_0,\Sigma;\Q),\\[1ex]
SH^{S^1,+}_{2k+1}(W_0)\cong H_2(W_0,\Sigma;\Q)\oplus H_4(W_0,\Sigma;\Q),
\end{array}\right.
\eeq
for all $k\in\N$. The second equation of \eqref{eq:computation cor A} implies the existence of $b_3(W_0,\Sigma;\Q)$ closed Reeb orbits with Conley-Zehnder index 2, say $\gamma_1,\dots,\gamma_{b_3(W_0,\Sigma;\Q)}$, see Remark \ref{rmk:contribution}. We claim that all $\gamma_i$s are simple. Indeed if $\gamma_i$ is not simple, it has to be a double cover of a simple one of Conley-Zehnder index 1 and thus bad, see Example \ref{ex:iteration formula for 2-dim}. Therefore,
$$
\mu_{CZ}(\gamma_i^k)=2k,\quad i\in\{1,\dots,b_3(W_0,\Sigma;\Q)\},\;k\in\N.
$$
Since $\dim SH_3^{S^1,+}(W_0)\geq1$, there exists another closed Reeb orbit $v$ with $\mu_{CZ}(v)=3$. If $v$ is simple, there exists another closed Reeb orbit to satisfy \eqref{eq:computation cor A}, see Proposition \ref{prop:index jump}. Suppose that $v$ is a multiple cover of a simple one, say $v_0$, and that there is no simple closed Reeb orbit except $v_0$ and $\gamma_i$'s. Then $\mu_{CZ}(v_0)=1$. If $v_0$ is hyperbolic, i.e. $\mu_{CZ}(v_0^k)=k$, only odd multiple covers take into account. Then there exists another simple closed Reeb orbit since $\dim SH_3^{S^1,+}(W_0)=\dim SH_1^{S^1,+}(W_0)+1\geq2$. Suppose that $v_0$ is elliptic, i.e. $\mu_{CZ}(v_0^k)=2[k\theta]+1$ for some $\theta\in(0,1)\setminus\Q$, see Example \ref{ex:iteration formula for 2-dim}. Since $v_0$ generates all odd degrees of $SH^{S^1,+}_{*}(W_0)$ and $\mu_{CZ}(v_0^k)\leq\mu_{CZ}(v_0^{k+1})$ for all $k\in\N$, 
$$
\mu_{CZ}(v_0^{kb_2(W_0,\Sigma;\Q)+1})=2k+1 ,\quad\forall k\in\N.
$$
Since by the index iteration formula
$$
\mu_{CZ}(v_0^{kb_2(W_0,\Sigma;\Q)+1})=2[(kb_2(W_0,\Sigma;\Q)+1)\theta]+1,
$$
we have 
$$
2[(kb_2(W_0,\Sigma;\Q)+1)\theta]+1=2k+1.
$$
By dividing both sides by $k$ and taking a limit $k\to\infty$, 
$$
\theta=\frac{1}{b_2(W_0,\Sigma;\Q)}\in\Q
$$
and thus $v_0^{b_2(W_0,\Sigma;\Q)}$ is degenerate. This contadiction completes the proof.
\hfill$\square$

\subsection{Prequantization bundles}\quad\\[-1.5ex]

Let $(Q,\Omega)$ be a closed symplectic manifold with an integral symplectic form $\Omega$, i.e. $[\Omega]\in H^2(Q;\Z)$. Since the first Chern class classifies isomorphism classes of complex line bundles, we can find a principal $S^1$-bundle $p:P\to Q$ with $c_1(P)=k[\Omega]$ for $k\in\N$. Such a {\em prequantization bundle} $P$ carries a connection 1-form $\alpha_{BW}$ such that the curvature form of $\alpha_{BW}$ is $-2k\pi\Om$, i.e. $-2k\pi p^*\Omega=d\alpha_{BW}$, see \cite{BW58} or \cite[Chapter 7.2]{Gei08}. Therefore a prequantization bundle $(P,\xi:=\ker\alpha_{BW})$ is a contact manifold and the Reeb flow of $\alpha_{BW}$ is periodic. Suppose that $c_1(Q)=c[\Omega]$ for some $c\in\Z$.
Due to the Gysin sequence for $S^1\into P\stackrel{p}{\to} Q$, $0=p^*c_1(P)=kp^*[\Om]$ and thus $c_1(\xi)=\pi^* c_1(Q)=cp^*[\Om]$ is a torsion class. Hence the Maslov indices for homologically trivial Reeb orbits are well defined. We remark that the generalized Maslov index due to \cite{RS93}  is well defined although the Conley-Zehnder index is not since $(P,\alpha_{BW})$ is Morse-Bott. These two indices agree in the nondegenerate case. Suppose furthermore that $(Q,\omega)$ is simply connected and that $[\omega]$ is a primitive element in $H^2(Q)$. We denote by $\gamma$ a principal orbit in $P$. Then since $\pi_1(P)=\Z_k$, the $k$-fold cover of $\gamma$ is contractible and its Maslov index equals to $2c$, i.e. $\mu_{Maslov}(\gamma^k)=2c$, see \cite[p.100]{Bou02}.

We learned the following remark and proposition from Otto van Koert. 

\begin{Rmk}[\cite{vK12}]\label{rmk:fillability criterion}
In this remark we construct some examples which meet requirements in Theorem B. Let $(B,\om)$ be a simply connected closed integral symplectic manifold such that $[\om]\in H^2(B;\Z)$ is primitive and $c_1(B)=a[\om]$ for some $a\in\Z$. Let $Q_k$ be a symplectic Donaldson hypersurface in $(B,\om)$ Poincar\'e dual to $k[\omega]$ for sufficiently large $k\in\N$, see \cite{Don96} and \cite[Section 6]{CDvK12}. Then according to \cite[Proposition 11]{Gir02}, $W:=B-\nu_B(Q_k)$ is a compact Weinstein domain. Here $\nu_B(Q_k)$ is the the normal disk bundle over $Q_k$ in $B$ with $c_1(\nu_B(Q_k))=k[\om|_{Q_k}]$. Therefore the prequantization  bundle $(P,\alpha_{BW})$ over $Q_k$ with $c_1(P)=k[\om|_{Q_k}]$ has a Weinstein filling $(W,\om|_W)$. Now we show that this example meets the assumptions in Theorem B. 
\begin{itemize}
\item[(i)] $c_1(W)|_{\pi_2(W)}=0$
\end{itemize}
since for $O\in\pi_2(B)$, $\langle c_1(W),O\rangle=\langle c_1(B)|_W,O\rangle=\langle a[\omega|_W],O\rangle=\langle a[d\lambda],O\rangle=0$ for some 1-form $\lambda$ on a Weinstein manifold $W$.
\begin{itemize}
\item[(ii)] $Q_k$ is simply connected
\end{itemize}
by (an analogue of) the Lefschetz hyperplane theorem, see \cite[Proposition 39]{Don96}.
\begin{itemize}
\item[(iii)] $c_1(Q_k)=(a-k)[\om|_{Q_k}]$
\end{itemize}
 due to $c_1(Q_k)=c_1(B)-c_1(\nu_B(Q))$. Moreover if $\dim W\geq6$, we have
\begin{itemize}
\item[(iv)]  $\pi_1(W)\cong\pi_1(\p W)$ 
\end{itemize}
since $W\simeq (\p W\x[0,1])\cup\{k\textrm{-handles}:k\geq 3\}$.

\end{Rmk}

\begin{Prop}[\cite{vK12}]\label{prop:SH for prequantization bundles}
Let $(P,\xi=\ker\alpha_{BW})$ be a prequantization bundle over a simply connected integral symplectic manifold $(Q,\om)$ of dimension $(2n-2)$ such that $[\om]$ is primitive and $c_1(P)=k[\om]$ for $k\in\N$. Suppose that $c_1(Q)=c[\om]$ for some $|c|>n-1$ and that $(P,\xi)$ admits an exact contact embedding $i:(P,\xi)\into(W,d\lambda)$ with $c_1(W)|_{\pi_2(W)=0}$ which is $\pi_1$-injective. Then
$$
SH^{S^1,+}_*(W_0)
\cong
\bigoplus_{N=1}^\infty H_{*-(2Nc-n+1)}(Q;\Q)
$$
where $W_0\subset W$ is the relatively compact domain bounded by $i(P)$.
\end{Prop}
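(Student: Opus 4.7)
The plan is to compute $SH^{S^1,+}_*(W_0)$ by directly analyzing the Morse--Bott structure of the Reeb flow on $(P,\alpha_{BW})$ and running the spectral sequence of Subsection~2.2 in its Morse--Bott form. First I would identify the closed Reeb orbits: since the Reeb vector field of the connection form $\alpha_{BW}$ generates the free circle action on $p\colon P\to Q$, every Reeb orbit is periodic, and the closed orbits organize into Morse--Bott families $S_N$, $N\geq1$, whose underlying geometric orbit spaces are each diffeomorphic to the base $Q$. A short argument using the long exact sequence $\pi_2(Q)\to\pi_1(S^1)\to\pi_1(P)\to\pi_1(Q)=1$ together with the primitivity of $[\Omega]$ shows that $\pi_1(P)\cong\Z_k$ with the simple fiber generating; combined with the $\pi_1$-injectivity of $P\hookrightarrow W$, this allows us to restrict to orbits contractible in $P$ and to label the contractible families by $N\in\N$, with the $N=1$ family consisting of the simple contractible orbit.

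Next I would compute the generalized Conley--Zehnder indices. Using a spanning disk in $W$ (whose choice is inessential thanks to $c_1(W)|_{\pi_2(W)}=0$) together with the standard trivialization of $\xi$ coming from the connection $\alpha_{BW}$, a direct computation shows that the mean Conley--Zehnder index of $S_N$ equals $2Nc$. After a small Morse--Bott perturbation each family splits into nondegenerate critical points whose Conley--Zehnder indices lie in $[2Nc-n+1,\,2Nc+n-1]$; since $2Nc$ is always even the parity is constant within each family and every orbit is good, so no bad-orbit contributions arise. The Borel construction on the parametrized $S^1$-family of $N$-fold covers has finite isotropy $\Z_N$, so it is rationally homotopy equivalent to the geometric orbit space $Q$, and the contribution of $S_N$ to the $E^1$-page becomes a copy of $H_*(Q;\Q)$ placed so that $H_j(Q)$ sits in total degree $2Nc-n+1+j$.

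Finally, I would exploit the index gap coming from $|c|>n-1$ to show the spectral sequence degenerates at $E^1$. The block contributed by $S_N$ occupies the interval $[2Nc-(n-1),\,2Nc+(n-1)]$ of width $2(n-1)$, while passing from $S_N$ to $S_{N+1}$ shifts the total degree by $2c$. The strict inequality $|c|>n-1$ guarantees that consecutive blocks are disjoint, so any higher differential $d^r$, $r\geq 2$, would have to connect distinct Morse--Bott families and is therefore forced to vanish purely by degree reasons. Hence $E^\infty=E^1$, and summing over $N$ yields the claimed identification. The main technical obstacle will be making the Morse--Bott $S^1$-equivariant framework rigorous in this geometric setting: the identification of the Borel contribution with $H_*(Q;\Q)$ and the verification that every orbit is good require care, whereas once the machinery is in place the index computation and the index-gap degeneration are routine.
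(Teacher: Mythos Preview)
Your proposal is correct and follows essentially the same route as the paper: identify the Morse--Bott families of contractible Reeb orbits via $\pi_1$-injectivity and the homotopy exact sequence of the bundle, compute their (shifted) Conley--Zehnder index as $2Nc-(n-1)$, recognize the $S^1$-equivariant contribution of each family as $H_*(Q;\Q)$, and then use the index gap $2|c|>2(n-1)$ to force degeneration of the Morse--Bott spectral sequence at the $E^1$-page. One small remark: in the paper's indexing each family $S_N$ occupies a single column $p=2Nc-(n-1)$, so already $d^1$ (not just $d^r$ for $r\geq2$) must land in a different family and vanishes by the same gap argument; your phrasing suggests only the higher differentials need checking, but this is cosmetic.
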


\begin{proof}
We compute the symplectic homology for $(W,d\lambda,P,\alpha_{BW})$, i.e. $\lambda|_{i(P)}=\alpha_{BW}$, and the resulting homology is an invariant for $(W,d\lambda,P,\xi)$. Since each fiber of $P\to Q$ is a closed Reeb orbit,  there is a $P$-family of simple closed Reeb orbits. More generally, the space of $j$-fold covered closed Reeb orbits can be identified with $j$-fold covered fibers which we denote by $P_j$ for each $j\in\N$. We note that since the map $i$ is injective on $\pi_1$-level and only $kN$-fold covered closed Reeb orbits are contractible,  Morse-Bott components are exactly $\{P_{kN}\,|\,k\in\N\}$. Recall that $\gamma$ is a principal orbit and thus $\gamma^{j}\in P_{j}$ for all $j\in\N$.  As in the subsection 2.2, there exists a Morse-Bott spectral sequence with $E^1$-page 
$$
E^1_{p,q}=\bigoplus_{\stackrel{N\in\N;}{\mu_{CZ}(\gamma^{kN})=p}} H^{S^1}_q(P_{kN};\Q)
$$
converging to $SH^{S^1,+}_*(W_0)$.\footnote{Although we constructed $S^1$-equivariant symplectic homology only in the nondegenerate case, the construction still works in the general Morse-Bott case with minor modifications.} 
 We observe that 
$$
\mu_{CZ}(\gamma^{kN})=\mu_{Maslov}(\gamma^{kN})-\frac{1}{2}\dim Q=2Nc-(n-1),
$$
see \cite{Bou02} or \cite[Appendix]{CF09}. Since $P$ is a principal circle bundle and every contractible closed Reeb orbit is good, we have 
$$
H^{S^1}_*(P_{kN};\Q)\cong H_*(Q\x B\Z_{kN};\Q)\cong H_*(Q;\Q).
$$
Since $2c\geq 2n$ and the height of the spectral sequence is $\dim Q=2n-2$, the spectral sequence stabilizes at the $E^1$-page, see Figure~\ref{fig:MB_spectral_sequence}, and hence we conclude
$
SH^{S^1,+}_*(W_0)
\cong
\bigoplus_{N\in\N} H_{*-(2Nc-n+1)}(Q;\Q).
$

\begin{figure}[htb]
\includegraphics[width=0.47\textwidth,clip]{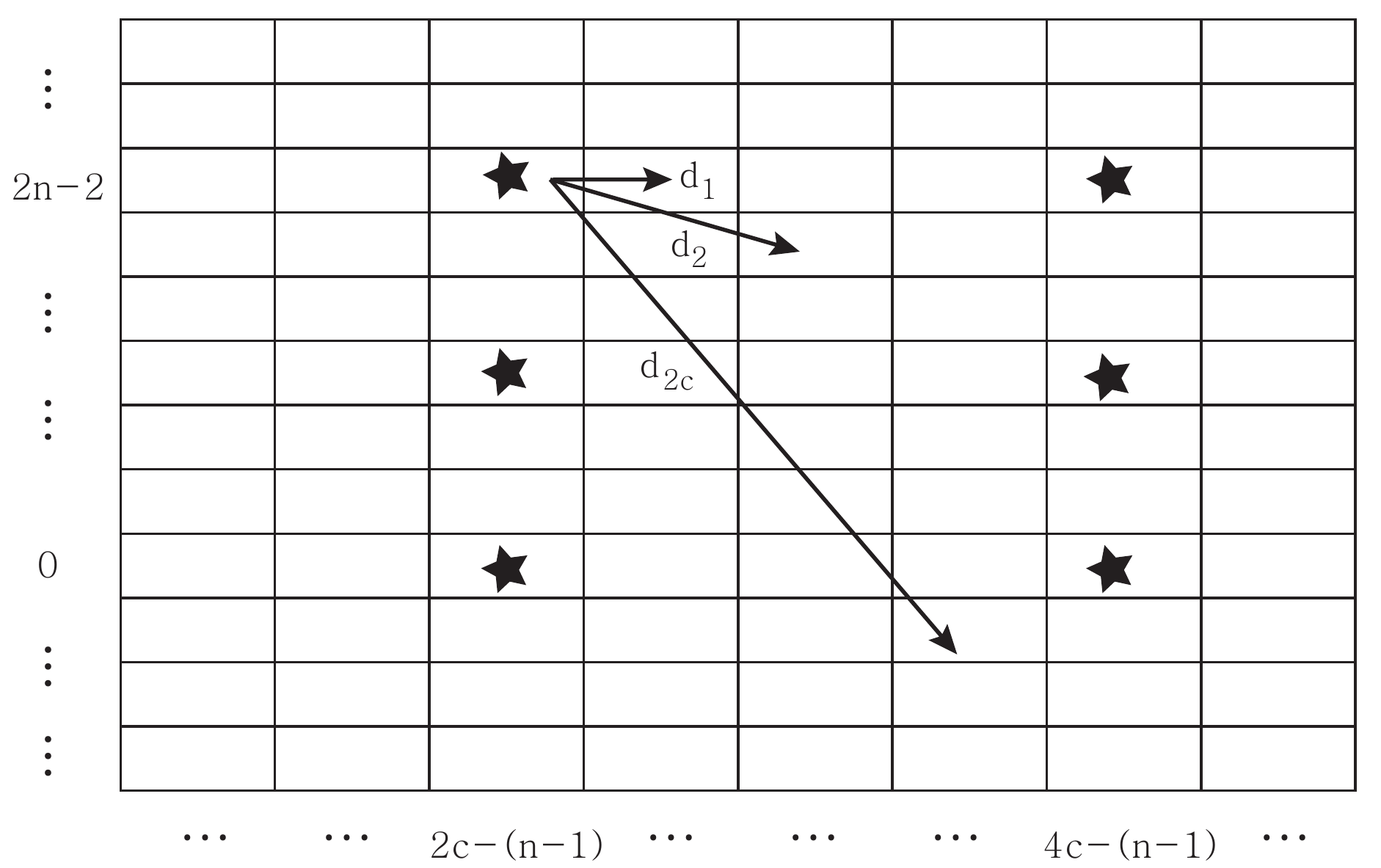}
\caption{$E^1$-page of the Morse-Bott spectral sequence}\label{fig:MB_spectral_sequence}
\end{figure}

\end{proof}

\noindent\textbf{Proof of Theorem B}.\quad\\[-1.5ex]

By Proposition \ref{prop:SH for prequantization bundles}, for all $N\in\N$, 
\begin{equation}\label{eq:computation}
SH^{S^1,+}_{2Nc-(n-1)}(W_0)=H_0(Q;\Q)=\Q,\quad SH^{S^1,+}_{2Nc+(n-1)}(W_0)=H_{2n-2}(Q;\Q)=\Q.
\end{equation}
We first treat the case $c\geq n$. Note that $SH^{S^1,+}_*(W_0)=0$ for all $*<2c-(n-1)$. Assume by contradiction that there is precisely one simple  closed Reeb orbit $\gamma$. If $\mu_{CZ}(\gamma)$ is smaller than $2c-(n-1)$, there has to exist another closed Reeb orbit $v$ with $\mu_{CZ}(v)\in\{\mu_{CZ}(\gamma)+1,\mu_{CZ}(\gamma)-1\}$ such that either $v$ is a boundary of $\gamma$ or $\gamma$ is a boundary of $v$ since $SH^{S^1,+}_{\mu_{CZ}(\gamma)}(W_0)=0$. But $v$ cannot be a multiple cover of $\gamma$ since otherwise $v$ is a bad orbit which is excluded in the $E^1$-page of the Morse-Bott spectral sequence, see Subsection 2.2. We may assume that $\mu_{CZ}(\gamma)\geq 2c-(n-1)\geq n+1$. Then since $\mu_{CZ}(\gamma^{k+1})\geq\mu_{CZ}(\gamma^k)+2$ for all $k$, see Proposition \ref{prop:index jump}, $\mu_{CZ}(\gamma)$ has to be $2c-(n-1)$ the first degree when $SH^{S^1,+}$ does not vanish. 

We first exclude the case $\mu_{CZ}(\gamma^k)=k\mu_{CZ}(\gamma)$, $\forall k\in\N$. Indeed, if this holds, for some $k\in\N$,
 $$
k(2c-(n-1))=\mu_{CZ}(\gamma^k)=2c+(n-1).
$$
If $k\geq3$, $c\leq n-1$ and this contradicts our assumption. Suppose that $k=2$ and $c=(3n-3)/2$. We know that for some $\ell\in\N$, 
$$
\mu_{CZ}(\gamma^\ell)=4c-(n-1)=5n-5.
$$
But on the other hand
$$
\mu_{CZ}(\gamma^\ell)=\ell\mu_{CZ}(\gamma)=\ell(2c-(n-1))=\ell(2n-2).
$$ 
This implies $\ell\notin\N$ and this contradiction excludes the case $\mu_{CZ}(\gamma^k)=k\mu_{CZ}(\gamma)$, $\forall k\in\N$. Therefore the index iteration formula for $\gamma$ has to be of the following form. Note that $\gamma$ is good since it is simple.
$$
\mu_{CZ}(\gamma^k)=rk+\sum_{i=1}^j 2[k\theta_i]+j, \quad j\in\{1,\dots,n-1\}
$$
where $\theta_i\in(0,1)\setminus\Q$ for all $i$. In particular we have
$$
\mu_{CZ}(\gamma)=r+j=2c-(n-1)
$$
Due to \eqref{eq:computation}, there exist $k\in\N$ satisfying 
$$
\mu_{CZ}(\gamma^k)=2c+n-1.
$$
This means that $\{\gamma,\cdots,\gamma^k\}$ generates $\bigoplus_{*=2c-(n-1)}^{2c+n-1}SH_*^{S^1,+}(W_0)$. Since $\mu_{CZ}(\gamma^{k+1})>\mu_{CZ}(\gamma^k)$ for all $k\in\N$ and $SH^{S^1,+}_*(W_0)=0$ for all $2c+n-1<*<4c-(n-1)$ according to Proposition \ref{prop:SH for prequantization bundles}, we have
$$
\mu_{CZ}(\gamma^{k+1})=4c-(n-1).
$$
Since $SH_*^{S^1,+}(W_0)$ is periodic, i.e. 
$$
\bigoplus_{*=2c-(n-1)}^{2c+n-1}SH_*^{S^1,+}(W_0)\cong\bigoplus_{*=2Nc-(n-1)}^{2Nc+n-1}SH_*^{S^1,+}(W_0)
$$
for all $N\in\N$ and 
$$
SH_*^{S^1,+}(W_0)=0,\quad *\notin\bigcup_{N=\N}[2Nc-(n-1),2Nc+n-1],
$$
$\{\gamma^{(N-1)k+1},\cdots,\gamma^{Nk}\}$ generates $\bigoplus_{*=2Nc-(n-1)}^{2Nc+n-1}SH_*^{S^1,+}(W_0)$ for all $N\in\N$. In particular,
$$
\mu_{CZ}(\gamma^{(N-1)k+1})=2Nc-(n-1),\quad \mu_{CZ}(\gamma^{Nk})=2Nc+(n-1)
$$
for all $N\in\N$. Therefore we obtain
\begin{equation}\label{eq1}
2Nc=\mu_{CZ}(\gamma^{Nk+1})-\mu_{CZ}(\gamma)=rNk+\sum_{i=1}^j2[(Nk+1)\theta_i]
\end{equation}
since 
$$
2(N+1)c-(n-1)=\mu_{CZ}(\gamma^{Nk+1})=r(Nk+1)+\sum_{i=1}^j2[k\theta_i]+j.
$$
In particular if $N=1$, we have
\begin{equation}\label{eq2}
r=\frac{2c-\sum_{i=1}^j 2[(k+1)\theta_i]}{k}.
\end{equation}
Again by \eqref{eq1} and \eqref{eq2}, we have
$$
N\sum_{i=1}^j[(k+1)\theta_i]=\sum_{i=1}^j[(Nk+1)\theta_i],\quad \textrm{for all}\;\; N\in\N.
$$
But dividing out both sides by $N$ and taking a limit $N\to\infty$, we deduce
$$
\sum_{i=1}^j(k+1)\theta_i=\sum_{i=1}^jk\theta_i
$$
and this contradiction proves the theorem in the case $c\geq n$.

Now we consider the case $c\leq-n$, and we still assume by contradiction that $\gamma$ is the only simple closed Reeb orbit. Due to Proposition \ref{prop:index decreasing}, $\mu_{CZ}(\gamma^k)>\mu_{CZ}(\gamma^{k+1})$ and thus $\mu_{CZ}(\gamma)=2c+n-1$, see \eqref{eq:computation}. As above we have for $N\in\N$,
$$
\mu_{CZ}(\gamma^{(N-1)k+1})=2Nc+(n-1),\quad\mu_{CZ}(\gamma^{Nk})=2Nc-(n-1)
$$
and this case is proved in a similar fashion.
\hfill$\square$

\subsection{Brieskorn spheres}\quad\\[-1.5ex]

With the notation of the Introduction, consider the Brieskorn sphere $(\Sigma_a,\xi_a)$. The contact homologies of Brieskorn spheres were computed originally by \cite{Ust99} and reproved using the Morse-Bott approach by \cite{Bou02}. It is possible to compute the positive part of the $S^1$-equivariant symplectic homology of $V_\epsilon(a)$, a natural Weinstein filling of $(\Sigma_a,\xi_a)$ in a similar way or using an isomorphism between contact homology and the positive part of $S^1$-equivariant symplectic homology in \cite{BO12b}. Therefore we have if $a_0\equiv\pm1$ mod 8 and $a_0\neq 1$,
$$
SH_*^{S^1,+}(V_\epsilon(a))=\left\{\begin{array}{ll}
0\quad &*\in 2\Z+1\;\; or \;\; *<n-1,\\[1ex]
\Q\oplus\Q &*\in2\Big[\frac{2N}{a_0}\Big]+2N(n-2)+n+1,\;N\in\N,\;2N+1\notin a_0\Z,\\[1.2ex]
\Q & otherwise.
\end{array}\right.
$$
If $a_0=1$, 
$$
SH_*^{S^1,+}(V_\epsilon(a))=\left\{\begin{array}{ll}
\Q & *=n-1+2k,\;k\in\N,\\[1ex]
0\quad & otherwise.
\end{array}\right.
$$
\begin{Rmk}\label{rmk:brieskorn}
The contact homology computations of $(\Sigma_a,\xi_a)$ in \cite{Ust99,Bou02} are not correct when $a_0=1$ but their proof can be easily rectified. For instance, in p.105 of \cite{Bou02}, the case (ii) Action$=p\pi$ (when $z_0\neq0$)  includes the case (i) Action$=\pi$ (when $z_0=0)$ when $a_0=1$. In fact, $\Sigma_a$ with $a_0=1$ is a standard sphere (i.e. has a tight contact structure) and thus the computation of the $S^1$-equivariant symplectic homology agrees with Proposition \ref{prop:symplectic homology computation}.
\end{Rmk}

\noindent\textbf{Proof of Theorem C.} \\[-1.5ex]

The case that $a_0=1$ was treated in Theorem A, see Remark \ref{rmk:brieskorn}. We assume that $a_0\neq 1$. Since $SH^{S^1,+}(V_\epsilon(a))$ does not vanish, there exists a simple closed Reeb orbit $\gamma$. Suppose that there is no another simple closed Reeb orbit except $\gamma$.\\[-1.5ex]

\noindent\underline{Case 1}. If $\mu_{CZ}(\gamma)<n-1$, there exists a closed Reeb orbit $v$ with $\mu_{CZ}(v)=\mu_{CZ}(\gamma)+1$ or $\mu_{CZ}(v)=\mu_{CZ}(\gamma)-1$ since $SH^{S^1,+}_{\mu_{CZ}(\gamma)}(V_\epsilon(a))=0$. Since $v$ has to be a good closed Reeb orbit to contribute $SH_*^{S^1,+}$, it cannot be a multiple cover of $\gamma$.\\[-1.5ex]

\noindent\underline{Case 2}. If $\mu_{CZ}(\gamma)\geq n-1$, $\mu_{CZ}(\gamma)=n-1$ since $SH^{S^1,+}_{n-1}(V_\epsilon(a))=\Q$ and $\mu_{CZ}(\gamma^{k+1})\geq\mu_{CZ}(\gamma^k)$ for all $k\in\N$ due to Proposition \ref{prop:index nondecreasing}. Thus the iteration formula for the Conley-Zehnder index of $\gamma$ is 
$$
\mu_{CZ}(\gamma^k)=kr+\sum_{i=1}^j2[k\theta_i]+j,\quad r+j=n-1
$$
where $j\in\{0,\dots,n-1\}$. We claim that $r=0$. If $r\in 2\N$, every multiple cover of $\gamma$ is good and there exists $k_0\in\N$ such that $\mu_{CZ}(\gamma^{k_0+1})\geq\mu_{CZ}(\gamma^{k_0})+4$. This contradicts that $\dim SH^{S^1,+}_*(V_\epsilon(a))\geq1$ for every even degree greater than $n-2$. If $r\in 2\N-1$, only odd multiple covers are good and there exists $k_0\in2\N+1$ such that $\mu_{CZ}(\gamma^{k_0+2})\geq\mu_{CZ}(\gamma^{k_0})+4$. This is again a contradiction and proves the claim. Therefore the formula for the Conley-Zehnder index reduces to
$$
\mu_{CZ}(\gamma^k)=\sum_{i=1}^{n-1}2[k\theta_i]+n-1.
$$
We observe that, since $a_0$ is odd,
$$
\big\{N\in\N\,\big|\,2N+1\in a_0\Z\big\}=\Big\{\frac{(2\ell-1)a_0-1}{2}\,\Big|\,\ell\in\N\Big\}.
$$
Let $g$ be a nondecreasing bijective map
$$
g:\N\to\{N\in\N\,|\,2N+1\notin a_0\Z\}.
$$
In fact,
\bean
g(N)&=N+\max\Big\{\ell\in\N\,\Big|\,\frac{(2\ell-1)a_0-1}{2}\leq N\Big\}\\
&=N+\Big[\frac{2N+1}{2a_0}+\frac{1}{2}\Big].
\eea
Then $\dim SH_*^{S^1,+}(V_\epsilon(a))=2$ if and only if $*=f(g(N))$ for some $N\in\N$ where
$$
f(N)=2\Big[\frac{2N}{a_0}\Big]+2N(n-2)+n+1.
$$
Since $\mu_{CZ}(\gamma^{k+1})\geq\mu_{CZ}(\gamma^k)$ for all $k\in\N$, 
\bean
SH_{f(g(1))}^{S^1,+}(V_\epsilon(a))=\Q\langle\gamma^n,\gamma^{n+1}\rangle,\quad
SH_{f(g(2))}^{S^1,+}(V_\epsilon(a))=\Q\langle\gamma^{2n-1},\gamma^{2n}\rangle,\quad\cdots
\eea
More generally we have
$$
SH_{f(g(N))}^{S^1,+}(V_\epsilon(a))=\Q\langle\gamma^{h(N)},\gamma^{h(N)+1}\rangle
$$
for 
$$
h(N)=\frac{f(g(N))-(n-1)}{2}+N.
$$
In particular, for $i\in\N$, 
$$
f(g(a_0^2i))=2ia_0^2n+2ia_0n-4ia_0^2+n+1
$$
and thus
$$
h(a_0^2i)=i(a_0^2n-a_0^2+a_0n+2)+1.
$$
We abbreviate 
$$
\beta:=a_0^2n-a_0^2+a_0n+2.
$$
We have observed that
$$
\mu_{CZ}(\gamma^{\beta i+1})=\mu_{CZ}(\gamma^{\beta i+2}),\quad \textrm{for all}\;\;i\in\N.
$$
However since $\theta_1$ is irrational, there exists $i_*\in\N$ such that 
$$
(\beta i_*+1)\theta_1-[(\beta i_*+1)\theta_1]\approx 1
$$
and thus
\bean
\mu_{CZ}(\gamma^{\beta i_*+2})&=\sum_{i=1}^{n-1}2[(\beta i_*+2)\theta_i]+n-1\\
&\geq\sum_{i=1}^{n-1}2[(\beta i_*+1)\theta_i]+n+1\\
&=\mu_{CZ}(\gamma^{\beta i_*+1})+2.
\eea
This contradiction shows that $\gamma$ cannot generate all nonzero homology classes and hence the theorem is proved.
\hfill$\square$

\section{Appendix: More examples}
In this appendix we give examples which have two closed Reeb orbits even though they do not meet the requirements of Theorem A. For simplicity we treat 5-dimensional case, see Example \ref{ex:iteration formula for 4-dim}.

\begin{prop}
{\em Let $(\Sigma,\xi)$ be a contact 5-manifold which has displaceable exact contact embedding into $(W,\om=d\lambda)$ which is convex infinity and satisfies $c_1(W)|_{\pi_2(W)}=0$. Suppose that a corresponding contact form $\alpha$ is nondegenerate and $\alpha-\lambda|_\Sigma$ is exact. If $b_2(W_0,\Sigma;\Q)=1$ and $b_4(W_0,\Sigma;\Q)=0$, there are two closed Reeb orbits contractible in $W$.}
\end{prop}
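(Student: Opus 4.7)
The plan is to compute $SH^{S^1,+}_*(W_0)$ explicitly from the given Betti numbers and then rule out the possibility that a single simple closed Reeb orbit $\gamma_0$ accounts for the entire positive $S^1$-equivariant symplectic homology. Since $\dim W=6$ and $n=3$, the remark following Theorem A gives $H_1(W_0,\Sigma;\Q)=0$, while Lefschetz duality yields $H_6(W_0,\Sigma;\Q)\cong\Q$. If $b_3(W_0,\Sigma;\Q)$ or $b_5(W_0,\Sigma;\Q)$ is nonzero then Theorem A(i) already applies, so I may assume $b_3=b_5=0$. Proposition \ref{prop:symplectic homology computation} then yields $SH^{S^1,+}_2(W_0)\cong\Q$, $SH^{S^1,+}_{2k}(W_0)\cong\Q^2$ for $k\ge 2$, and all other groups zero; in particular Corollary \ref{cor:resonance identity in the displaceable case} gives $\chi_m(W_0)=1$.

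Suppose by contradiction that $\gamma_0$ is the only simple closed Reeb orbit contractible in $W$. I will go through the four iteration patterns of Example \ref{ex:iteration formula for 4-dim} and exclude each. A preliminary observation is that, in every one of these patterns, the good iterates of $\gamma_0$ share a single parity of $\mu_{CZ}$; consequently the Morse-Bott $E^1$-page is concentrated in one parity, its $d^1$ vanishes, and $SH^{S^1,+}_*(W_0)$ is freely generated by the good iterates graded by $\mu_{CZ}$. Since $SH^{S^1,+}_*(W_0)$ vanishes for $*<2$, every good iterate of $\gamma_0$ must have $\mu_{CZ}\ge 2$; in particular $\mu_{CZ}(\gamma_0)\ge 2$.

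The hyperbolic and elliptic type 2 subcases, in which $\mu_{CZ}(\gamma_0^k)=k\,\mu_{CZ}(\gamma_0)$, are the easiest: each index value is attained by at most one good iterate, so one cannot match $\dim SH^{S^1,+}_{2k}(W_0)=2$ for $k\ge 2$. In the mixed case $e(\gamma_0)=2$ with $\mu_{CZ}(\gamma_0)$ odd, every iterate has odd $\mu_{CZ}$ and contradicts the vanishing of $SH^{S^1,+}$ in odd degrees; when $\mu_{CZ}(\gamma_0)$ is even the even iterates are bad, so $\gamma_0\in\mathfrak{B}_s$ and Proposition \ref{prop:resonance identity} forces $\Delta(\gamma_0)=1/2$, but $\Delta(\gamma_0)=\mu_{CZ}(\gamma_0)-1+2\theta$ with $\theta\in(0,1)\setminus\Q$ is irrational, a contradiction.

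The elliptic type 1 case will be the main obstacle. Here $\mu_{CZ}(\gamma_0)$ is even, all iterates are good, and $\Delta(\gamma_0)=\mu_{CZ}(\gamma_0)-2+2(\theta_1+\theta_2)$. Proposition \ref{prop:resonance identity} gives $\Delta(\gamma_0)=1$, and together with $\theta_1,\theta_2\in(0,1)\setminus\Q$ and $\mu_{CZ}(\gamma_0)\ge 2$ this pins down $\mu_{CZ}(\gamma_0)=2$ and $\theta_1+\theta_2=1/2$. But then $\theta_i<1/2$ for each $i$, hence $[2\theta_i]=0$, so $\mu_{CZ}(\gamma_0^2)=2[2\theta_1]+2[2\theta_2]+2=2$ as well, producing two independent generators of $E^1_{2,0}$ while $\dim SH^{S^1,+}_2(W_0)=1$. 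The delicate point is precisely this final argument at the lowest non-vanishing degree, where the eventual periodicity of $SH^{S^1,+}(W_0)$ breaks down and one must extract fine information about $\theta_1$ and $\theta_2$.
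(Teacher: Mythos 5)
There is a genuine gap, and it stems from a miscomputation of the homology in degree $0$. With $n=3$ and $H_2(W_0,\Sigma;\Q)\cong\Q$, Proposition~\ref{prop:symplectic homology computation} gives
$$
SH_0^{S^1,+}(W_0)\cong H_2(W_0,\Sigma;\Q)\otimes H_0(\C P^\infty;\Q)\cong\Q,
$$
so $SH_0^{S^1,+}(W_0)\neq 0$. Your preliminary observation that ``$SH_*^{S^1,+}(W_0)$ vanishes for $*<2$, hence every good iterate has $\mu_{CZ}\geq 2$'' is therefore false: there must be exactly one good iterate of index $0$. Your treatment of the mixed case $e(\gamma_0)=2$ and of the hyperbolic/elliptic-type-2 patterns is fine and does not really use this, but the elliptic type~1 case falls apart.

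In that case $\Delta(\gamma_0)=\mu_{CZ}(\gamma_0)-2+2(\theta_1+\theta_2)=1$ with $\mu_{CZ}(\gamma_0)$ even and $\theta_1,\theta_2\in(0,1)$ leaves \emph{two} possibilities: $\mu_{CZ}(\gamma_0)=2$ with $\theta_1+\theta_2=\tfrac12$, or $\mu_{CZ}(\gamma_0)=0$ with $\theta_1+\theta_2=\tfrac32$. Your argument only rules out the first one (and, incidentally, for the wrong reason: with $\theta_1+\theta_2=\tfrac12$ the minimum of $\mu_{CZ}(\gamma_0^k)=2[k\theta_1]+2[k\theta_2]+2$ is $2$, so no iterate reaches degree $0$, contradicting $SH_0^{S^1,+}=\Q$ directly). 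The second possibility is the genuinely hard one: with $\mu_{CZ}(\gamma_0)=0$ and $\theta_1+\theta_2=\tfrac32$ one finds $\mu_{CZ}(\gamma_0^{2k})=2k$ and then is forced (by counting generators per degree) to $\mu_{CZ}(\gamma_0^{2k+1})=2k+2$, which yields a sequence of indices $0,2,4,4,6,6,\dots$ that matches the homology dimensions exactly. No naive dimension count produces a contradiction here. The paper's proof instead extracts $\{(2k+1)\theta_1\}+\{(2k+1)\theta_2\}=\tfrac12$ for all $k$, hence $\{(2k+1)\theta_1\}<\tfrac12$ for all $k$, which is impossible because $\{(2k+1)\theta_1\bmod 1\}$ is dense in $[0,1]$ for irrational $\theta_1$. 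That equidistribution step is the crux, and it is precisely the case your proof never reaches.
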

\begin{proof}
We may assume that $b_3(W_0,\Sigma;\Q)=b_5(W_0,\Sigma;\Q)=0$ since otherwise the assertion is covered by Theorem A. According to Proposition \ref{prop:symplectic homology computation}, we have 
$$
SH_0^{S^1,+}(W_0)=SH_2^{S^1,+}(W_0)=\Q,\quad SH_{2\ell+2}^{S^1,+}(W_0)=\Q\oplus\Q,\quad \ell\in\N.
$$
and 
$$
SH_*^{S^1,+}(W_0)=0,\quad *\in\Z\setminus(2\N\cup\{0\}).
$$
Suppose that there exists precisely one simple closed Reeb orbit $\gamma$.  We recall that $|\mu_{CZ}(\gamma^k)-k\Delta(\gamma)|<n-1$ from \cite{SZ92} and this shows that $\Delta(\gamma)>0$ since multiple covers of $\gamma$ have to generate all nonzero homology classes. One can immediately see that $\gamma$ cannot be hyperbolic, see Example \ref{ex:iteration formula for 4-dim}. If $\gamma$ is not elliptic nor hyperbolic, it has to be
$$
\mu_{CZ}(\gamma^k)=-k+2[k\theta]+1,\quad k\in\N,\;\theta\in(0,1)\setminus\Q.
$$
But Corollary \ref{cor:resonance identity in the displaceable case} implies 
$$
1=\frac{1}{2\Delta(\gamma)}=\frac{1}{2(2\theta-1)}
$$
which implies a contradiction $\theta=3/4$. The remaining case is that $\gamma$ is elliptic and 
$$
\mu_{CZ}(\gamma^k)=-2k+2[k\theta_1]+2[k\theta_2]+2,\quad k\in\N,\;\theta_i\in(0,1)\setminus\Q.
$$
 Again by Corollary \ref{cor:resonance identity in the displaceable case}, we obtain
$$
\theta_1+\theta_2=\frac{3}{2}.
$$
Since both $\theta_1$ and $\theta_2$ are irrational, we have 
$$
[2k\theta_1]+[2k\theta_2]=[2k\theta_1]+[3k-2k\theta_1]=3k-1,\quad k\in\N
$$
and thus $\mu_{CZ}(\gamma^{2k})=2k$. From this we can derive $\mu_{CZ}(\gamma^{2k+1})=2k+2$ for all $k\in\N$ since $|\mu_{CZ}(\gamma^{k+1})-\mu_{CZ}(\gamma^k)|\leq2$. This yields that
$$
[(2k+1)\theta_1]+[(2k+1)\theta_2]=[(2k+1)\theta_1]+[3k+1+1/2-(2k+1)\theta_1]=3k+1,
$$
and thus we have the following contradictory inequality.
$$
(2k+1)\theta_1-[(2k+1)\theta_1]<\frac{1}{2},\quad k\in\N.
$$
Indeed, since $\theta_1$ is irrational, the sequence $\{(2k+1)\theta_1 \textrm{ mod } 1\,|\,k\in\N\}$ is dense in $[0,1]$.
\end{proof}

\begin{prop}
{\em Let $(\Sigma,\xi)$ be a contact 5-manifolds which has displaceable exact contact embedding into $(W,\om=d\lambda)$ which is convex infinity and satisfies $c_1(W)|_{\pi_2(W)}=0$. Suppose that a corresponding contact form $\alpha$ is nondegenerate and $\alpha-\lambda|_\Sigma$ is exact. If $b_2(W_0,\Sigma;\Q)=1$ and $b_4(W_0,\Sigma;\Q)\geq 4$, there are two closed Reeb orbits contractible in $W$.}
\end{prop}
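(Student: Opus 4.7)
The plan is to follow the pattern of the preceding proposition: assume by contradiction that $(\Sigma,\alpha)$ admits exactly one simple closed Reeb orbit $\gamma$, and rule out every possibility for its iteration behaviour given in Example~\ref{ex:iteration formula for 4-dim}. By Theorem~A we may assume $b_1=b_3=b_5=0$, and Proposition~\ref{prop:symplectic homology computation} then yields $\dim SH_0^{S^1,+}(W_0)=1$, $\dim SH_2^{S^1,+}(W_0)=1+b_4$, and $\dim SH_{2k}^{S^1,+}(W_0)=M$ for every $k\geq 2$, where $M:=1+b_4+b_6\geq 6$, while the odd-degree part vanishes. Corollary~\ref{cor:resonance identity in the displaceable case} then gives $\chi_m(W_0)=M/2\geq 3$, so the resonance identity forces $\mu_{CZ}(\gamma)$ to be even, $\Delta(\gamma)>0$, and $\Delta(\gamma)\leq 2/M$.

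I would first dispose of the easy cases. The hyperbolic case and the elliptic case with $\mu_{CZ}(\gamma^k)=k\mu_{CZ}(\gamma)$ are ruled out because $\Delta(\gamma)$ would then be a positive integer. When $e(\gamma)=2$, evenness of $\mu_{CZ}(\gamma)$ forces $\gamma\in\mathfrak{B}_s$, and the resonance identity $\chi_m=1/(2\Delta(\gamma))$ yields $\theta=\tfrac12+\tfrac{1}{2M}\in\Q$, contradicting nondegeneracy exactly as in the proof of the previous proposition. For the remaining elliptic $(\theta_1,\theta_2)$-case, $\Delta(\gamma)=\mu_{CZ}(\gamma)-2+2(\theta_1+\theta_2)=2/M$ together with $\theta_i\in(0,1)\setminus\Q$ leaves only $\mu_{CZ}(\gamma)\in\{0,2\}$, and $\mu_{CZ}(\gamma)=2$ is excluded because then $\mu_{CZ}(\gamma^k)\geq 2$ for every $k$, leaving $SH_0^{S^1,+}(W_0)=\Q$ without a generator.

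The heart of the proof is the surviving sub-case $\mu_{CZ}(\gamma)=0$ with $\theta_1+\theta_2=1+1/M$. The key observation is that, since every iterate $\gamma^k$ has even Conley--Zehnder index while $SH^{S^1,+}(W_0)$ vanishes in odd degrees, the differential on $SC^{S^1,+}$ is identically zero; thus for each $L$ the count $N_L:=\#\{k\in\N:\mu_{CZ}(\gamma^k)=2L\}$ equals \emph{exactly} $\dim SH_{2L}^{S^1,+}(W_0)$. In particular $N_1=M-1$, and combined with $\mu_{CZ}(\gamma^k)=-2k+2[k\theta_1]+2[k\theta_2]+2$ this forces $[k\theta_1]+[k\theta_2]=k$ for every $k=2,3,4,5$. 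Processing these one at a time, $k=2$ gives $\theta_1,\theta_2>\tfrac12$; $k=3$ (after relabelling) gives $\theta_1<\tfrac23<\theta_2$; $k=4$ gives $\theta_2<\tfrac34$; and $k=5$ gives $\theta_1<\tfrac35$. Summing these strict lower bounds yields $\theta_1+\theta_2>\tfrac12+\tfrac23=\tfrac76$, contradicting $\theta_1+\theta_2=1+1/M\leq\tfrac76$ whenever $M\geq 6$.

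The main obstacle is precisely this last sub-case. Unlike in the hyperbolic and $e(\gamma)=2$ cases, the mean Euler characteristic fixes only the sum $\theta_1+\theta_2$ and leaves the individual $\theta_i$ free, so the resonance identity alone is insufficient. The crucial additional input is the sharp per-degree count $N_L=\dim SH_{2L}^{S^1,+}(W_0)$, which relies on the vanishing of $SH^{S^1,+}$ in odd degrees to force all iterates to be cycles; this pins down the integer parts $[k\theta_i]$ for small $k$ and yields the final arithmetic contradiction.
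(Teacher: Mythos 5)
Your proof is correct and takes essentially the same approach as the paper: after ruling out the hyperbolic, linear-elliptic, and $e(\gamma)=2$ cases via the resonance identity, both arguments reduce to the elliptic sub-case $\mu_{CZ}(\gamma)=0$ with $\theta_1+\theta_2=1+1/M$, use the vanishing of odd-degree $SH^{S^1,+}$ (so the differential on the $E^1$-page vanishes) to force $\mu_{CZ}(\gamma^k)\geq 2$ for all $k\geq 2$, and extract $\theta_1,\theta_2>\tfrac12$ from $k=2$ and one $\theta_i>\tfrac23$ from $k=3$, contradicting $\theta_1+\theta_2\leq\tfrac76$. Your additional constraints from $k=4,5$ are correct but superfluous for the final contradiction, and note that $\Delta(\gamma)>0$ is a hypothesis of, not a consequence of, Corollary~\ref{cor:resonance identity in the displaceable case} and should be justified separately (e.g.\ via the Salamon--Zehnder estimate as in the paper).
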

\begin{proof}
By the same reason as above we may assume that $b_3(W_0,\Sigma;\Q)=b_5(W_0,\Sigma;\Q)=0$. According to Proposition \ref{prop:symplectic homology computation}, we have 
$$
SH_0^{S^1,+}(W_0)=\Q,\quad SH_2^{S^1,+}(W_0)=\Q^{\oplus(b_4(W_0,\Sigma;\Q)+1)},\quad SH_{2\ell+2}^{S^1,+}(W_0)=\Q^{\oplus(b_4(W_0,\Sigma;\Q)+2)}
$$
for all $\ell\in\N$ and 
$$
SH^{S^1,+}_*(W_0)=0,\quad *\in\Z\setminus(2\N\cup\{0\}).
$$
Assume by contradiction that there exists precisely one simple closed Reeb orbit $\gamma$. As in the proof of the previous proposition, $\Delta(\gamma)>0$. The case that $\gamma$ is not elliptic can be easily excluded. If $\gamma$ is hyperbolic, $\mu_{CZ}(\gamma^k)=k\mu_{CZ}(\gamma)$ and multiple covers of $\gamma$ cannot generate all nontrivial homology classes. Suppose that $\gamma$ is not elliptic nor hyperbolic. Then 
$$
\mu_{CZ}(\gamma^k)=rk+2[k\theta]+1,\quad k\in\N,\;\theta\in(0,1)\setminus\Q.
$$
for some $r\in\Z$. Due to the homology computation, $r=-1$ and  $\mu_{CZ}(\gamma^k)=-k+2[k\theta]+1$.  But by Corollary \ref{cor:resonance identity in the displaceable case}, we have
$$
\frac{b_4(W_0,\Sigma;\Q)+2}{2}=\frac{1}{2\Delta(\gamma)}=\frac{1}{2(2\theta-1)}
$$
and this contradicts $\theta\notin\Q$.
The only possible nontrivial case is that $\mu_{CZ}(\gamma)=0$ and
$$
\mu_{CZ}(\gamma^k)=-2k+2[k\theta_1]+2[k\theta_2]+2,\quad\theta_i\in(0,1)\setminus\Q,\; k\in\N.
$$
 Corollary \ref{cor:resonance identity in the displaceable case} yields that
$$
\theta_1+\theta_2=\frac{b_4(W_0,\Sigma;\Q)+3}{b_4(W_0,\Sigma;\Q)+2}\leq\frac{7}{6}.
$$
We note that $\mu_{CZ}(\gamma^k)\geq2$ for $k\geq2$ because of $SH_0^{S^1,+}(W_0)=\Q$ and $SH_*^{S^1,+}(W_0)=0$ for all $*<0$. Since $\mu_{CZ}(\gamma^2)\geq2$, both $\theta_1$ and $\theta_2$ are bigger than $1/2$. In addition $\mu_{CZ}(\gamma^3)\geq2$ implies that one of $\theta_1$ and $\theta_2$ is bigger than $2/3$. Thus we deduce
$$
\theta_1+\theta_2> \frac{2}{3}+\frac{1}{2}=\frac{7}{6}.
$$
This contradiction completes the proof.
\end{proof}

%%%%%%%%%%%%%%%%%%%%%%%%%%%%%%%%%%%%%%%%%%

\end{document}